\documentclass[11pt]{article}

\usepackage{amssymb}
\usepackage{amsthm}
\usepackage{color}

\newtheorem{thm}{Theorem}
\newtheorem{lem}{Lemma}[section]

\newtheorem{rmk}{Remark}

\title{Global dynamics and spectrum comparison of a reaction-diffusion system with mass conservation}

\author{Evangelos Latos, Yoshihisa Morita, and Takashi Suzuki}

\date{}

\begin{document}

\maketitle 

\begin{abstract}{We study global-in-time behavior of the solution to a reaction-diffusion system with mass conservation, as proposed in the study of cell polarity, particularly, the second model of \cite{oi07}.  First, we show global-in-time existence of solution with compact orbit and then we examine stability and instability of stationary solutions.}{Reaction diffusion system; mass conservation; cell polarity; global-in-time behavior; Lyapunov function.}
\\
2000 Math Subject Classification: {\bf MSC2010.} 35K457, 92C37.
\end{abstract}

\begin{flushleft}
{\bf keywords.} reaction diffusion system, mass conservation, cell polarity, global-in-time behavior, Lyapunov function
\end{flushleft}

\section{Introduction}

The purpose of the present paper is to study the mass conserved reaction-diffusion system 
\begin{eqnarray} 
& & u_t=D\Delta u+f(u,v), \quad \tau v_t=\Delta v-f(u,v), \quad \mbox{in $\Omega\times(0,T)$}, \nonumber\\ 
& & \left.\frac{\partial}{\partial \nu}(u,v)\right\vert_{\partial\Omega}=0, \quad \left. (u,v)\right\vert_{t=0}=(u_0(x), v_0(x)), 
 \label{rd0} 
\end{eqnarray} 
where $\Omega\subset{\bf R}^N$ is a bounded domain with smooth boundary $\partial\Omega$, $\nu$ is the outer unit normal vector, $D, \tau>0$ are constants, and $(u_0, v_0)=(u_0(x), v_0(x))$ are smooth nonnegative functions.  Given sufficiently smooth nonlinearity $f=f(u,v)$, standard theory allows the existence of a unique  local-in-time classical solution $(u,v)=(u(\cdot, t), v(\cdot,t))$ to (\ref{rd0}).  Then mass conservation property for this system writes 
\begin{equation} 
\frac{d}{dt}\int_\Omega u+\tau v\ dx=0. 
 \label{tmc}
\end{equation} 

Several equations in this form are used in the study of cell polarity, e.g. \cite{oi07, iom07}. It is expected that different spieces inside the cell shall separate according to their diffusion coefficients, i.e. slow and fast diffusions will localize the spieces near the membrane and in the cytosol, respectively.  Although three kind of molecules are interacting inside the cell in \cite{oi07}, each one of them has two phases, active and inactive which are characterized by slow and fast diffusions, respectively. Problem (\ref{rd0}) thus focuses on these two phases of a single species, ignoring interactions between the other species. 

Therefore, Turing pattern (see \cite{tur52}) is suspected for problem (\ref{rd0}), that is, the appearance of spatially inhomogeneous stable stationary states induced by diffusion. In \cite{oi07} the authors presented the following three models for this purpose,  
\begin{eqnarray} 
& & f(u,v)=-\frac{au}{u^2+b}+v, \nonumber\\ 
& & f(u,v)=-\alpha_1\left[\frac{ u+v}{\biggl(\alpha_2(u+v)+1\biggr)^2}-v\right], \nonumber\\ 
& & f(u,v)=\alpha_1(u+v)[(\alpha u+v)(u+v)-\alpha_2],  
 \label{models}
\end{eqnarray} 
where $a$, $b$, $\alpha,\alpha_1$, and $\alpha_2$ are positive constants.  So far, mathematical analysis is done for the first model, noticing the similarity between the Fix-Caginalp model \cite{st09} (see \cite{jm13, ls13, mor12, mo10}). 

This paper deals with the second form of the reaction term $f(u,v)$ of (\ref{models}). The first fact we shall confirm on this model is the non-negativity of the solution.  
\begin{thm} 
The solution $(u,v)=(u(\cdot,t), v(\cdot,t))$ to (\ref{rd0}) for the second case of $f(u,v)$ in (\ref{models}) satisfies 
\begin{equation} 
u(\cdot,t), \ v(\cdot,t)\geq 0 \quad \mbox{on $\overline{\Omega}$}, \ t\geq 0. 
 \label{eqn:positive}
\end{equation} 
 \label{prop:3}
\end{thm}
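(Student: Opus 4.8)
The plan is to establish positivity by a maximum-principle argument built on the quasi-positivity of the reaction term, which I would realize concretely through an energy estimate on the negative parts of $u$ and $v$. Writing $g(s)=\frac{s}{(\alpha_2 s+1)^2}$ so that $f(u,v)=\alpha_1(v-g(u+v))$, the first step is to record the two sign conditions on the boundary of the positive quadrant: for $v\geq 0$ one has $f(0,v)=\alpha_1 v(1-(\alpha_2 v+1)^{-2})\geq 0$, and for $u\geq 0$ one has $f(u,0)=-\alpha_1 u(\alpha_2 u+1)^{-2}\leq 0$. These are precisely the conditions that make the reaction point inward along each face of $\{u\geq 0,\ v\geq 0\}$, so that quadrant should be invariant; the work is to convert this into a rigorous estimate in the presence of diffusion and coupling.

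Working on the interval of classical existence provided by the standard theory quoted in the introduction, I would introduce $u^-=\max\{-u,0\}$, $v^-=\max\{-v,0\}$, and the energy $E(t)=\frac12\int_\Omega(u^-)^2\,dx+\frac{\tau}{2}\int_\Omega(v^-)^2\,dx$, with $E(0)=0$ since $u_0,v_0\geq 0$. Testing the $u$-equation against $-u^-$ and the $v$-equation against $-v^-$, integrating over $\Omega$, and using the Neumann condition, the diffusion terms become $-D\int_{\{u<0\}}|\nabla u|^2$ and $-\int_{\{v<0\}}|\nabla v|^2$, both nonpositive and hence harmless. The remaining reaction contributions $\int_{\{u<0\}}fu\,dx$ and $-\int_{\{v<0\}}fv\,dx$ I would expand using $g(u+v)=(u+v)\psi(u+v)$ with $\psi(s)=(\alpha_2 s+1)^{-2}$ and the splitting into positive and negative parts; the goal is to show that every resulting term is either manifestly nonpositive or bounded by $C\,E(t)$ after Young's inequality. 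Gronwall's inequality together with $E(0)=0$ then yields $E\equiv 0$, that is $u,v\geq 0$.

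The main obstacle is that this estimate is only conditional: the factor $\psi(u+v)$ blows up as $u+v\to-1/\alpha_2$, and the one genuinely dangerous term, $\alpha_1\int_{\{u<0\}}v^+u^-(\psi-1)$, has the wrong sign exactly on $\{-1/\alpha_2<u+v<0\}$. There I would use that $u+v<0$ forces $v^+<u^-$, so the term is dominated by $(\psi_{\max}-1)\int(u^-)^2$, which is admissible as long as $u+v$ is kept away from $-1/\alpha_2$ and hence $\psi\leq\psi_{\max}<\infty$. To discharge this conditioning I would run a continuity argument on $T^*=\sup\{t:\ u(\cdot,s),v(\cdot,s)\geq 0\ \mbox{for }s\in[0,t]\}$: on $[0,T^*]$ one has $u+v\geq 0$, the estimate applies and returns $u,v\geq 0$; were $T^*$ strictly below the existence time, continuity of the classical solution would keep $u+v\geq-1/(2\alpha_2)$ on a short interval past $T^*$, where the same estimate with $E(T^*)=0$ again gives nonnegativity and contradicts maximality. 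Alternatively, having recorded the two sign conditions, one may invoke the invariant-region theorem for diagonal reaction-diffusion systems, for which $\{u\geq 0,\ v\geq 0\}$ is an admissible box, to reach the same conclusion.
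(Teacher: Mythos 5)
Your proposal is correct, but it takes a genuinely different route from the paper. The paper's proof is a truncation-plus-comparison argument: it replaces $u,v$ by $u_+,v_+$ inside the nonlinearity to form an auxiliary system whose reaction term is globally smooth (since $u_++v_+\geq 0$, the denominator never degenerates), observes the pointwise differential inequalities $u_t\geq D\Delta u-ku_+$ and $\tau v_t\geq \Delta v-kv$ for that auxiliary system, concludes $u,v\geq 0$ from the scalar maximum principle applied to each component separately, and finally identifies the auxiliary solution with the original one by uniqueness. You instead run an $L^2$ energy estimate on the negative parts with Gronwall, supplemented by a continuation argument past $T^*$ (or, alternatively, the invariant-region theorem), and both rest on the same quasi-positivity conditions $f(0,v)\geq 0$, $f(u,0)\leq 0$. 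The main trade-off is how the singularity of $(\alpha_2(u+v)+1)^{-2}$ at $u+v=-1/\alpha_2$ is handled: the paper's truncation makes the issue disappear entirely and yields a very short proof (at the cost of invoking uniqueness for the original problem to transfer the conclusion back), whereas your energy method is self-contained but must confront the singular factor head-on, which is why you need both the continuity argument to keep $u+v\geq -1/(2\alpha_2)$ and the pointwise observation $v^+\leq u^-$ on the bad set to tame the one wrong-sign term. One presentational remark: the clause ``on $[0,T^*]$ one has $u+v\geq 0$, the estimate applies and returns $u,v\geq 0$'' is circular as stated (nonnegativity up to $T^*$ is already forced by the definition of $T^*$ and continuity); the real content of your argument lies entirely in the extension to $[T^*,T^*+\delta]$ with $E(T^*)=0$, which is sound.
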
 
Regarding (\ref{eqn:positive}), we let 
\begin{equation} 
h(z)=-\frac{\alpha_1z}{(\alpha_2z+1)^2}, \quad k=\alpha_1,  
 \label{eqn:5}
\end{equation} 
to write the model as 
\begin{eqnarray} 
& & u_t=D\Delta u+h(u+v)+kv, \nonumber\\ 
& & \tau v_t=\Delta v-h(u+v)-kv, \qquad \ \ \mbox{in $\Omega\times(0,T)$}, \nonumber\\ 
& & \left. \frac{\partial}{\partial \nu}(u,v)\right\vert_{\partial\Omega}=0, \quad \left. (u, v)\right\vert_{t=0}=(u_0(x), v_0(x)).
 \label{rdm1a} 
\end{eqnarray} 
Here we assume $\tau\neq 1$ and furthermore, 
\begin{equation} 
\xi=\frac{1-\tau D}{\tau-1}>0, \quad \alpha=\frac{1-D}{\tau-1}>0, 
 \label{parameter}
\end{equation} 
that is, either $\tau>1>\tau D$ or $\tau D>1>\tau$.  Using 
\begin{eqnarray} 
& & w=Du+v,\quad z=u+v, \nonumber\\ 
& & g(z)=(1-D)h(z)-kDz, 
 \label{eqn:7}
\end{eqnarray} 
system (\ref{rdm1a}) transforms into 
\begin{eqnarray} 
& & z_t=D\Delta z+(w_t-D\Delta w+kw)+g(z), \nonumber\\ 
& & w_t+\xi z_t=\alpha\Delta w, \qquad \qquad \qquad \quad \mbox{in $\Omega\times(0,T)$}, \nonumber\\ 
& & \left. \frac{\partial}{\partial \nu}(z,w)\right\vert_{\partial\Omega}=0, \quad \left. (z, w)\right\vert_{t=0}=(z_0(x), w_0(x)). 
 \label{rdm1} 
\end{eqnarray} 

If the second term on the right-hand side of the first equation of system (\ref{rdm1}) is reduced to $kw$, we obtain 
\begin{eqnarray} 
& & z_t=D\Delta z+kw+g(z), \quad w_t+\xi z_t=\alpha\Delta w \qquad \mbox{in $\Omega\times(0,T)$} \nonumber\\ 
& & \left. \frac{\partial}{\partial \nu}(z,w)\right\vert_{\partial\Omega}=0, \quad \left. (z, w)\right\vert_{t=0}=(z_0(x), w_0(x)), 
 \label{eqn:10}
\end{eqnarray} 
where $z_0=u_0+v_0$ and $w_0=Du_0+v_0$.  It is a generalization of the Fix-Caginalp model \cite{fix83, cag86} for $g(z)=z-z^3$.  We noticed that the first model of (\ref{models}) is reduced to (\ref{eqn:10}) (see \cite{mo10}).  Then, as in the Fix-Caginalp model \cite{st09}, we used a variational structure arising between the Lyapunov function and the stationary state, to clarify the global-in-time dynamics \cite{ls13} in accordance with a spectral property of the stationary state \cite{mor12}.  

Here we show similar properties for problem (\ref{rdm1}). In this model, we still have a Lyapunov function which induces a variational function to formulate a stationary state.  Accordingly the non-stationary solution is global-in-time (Theorem \ref{gethm}), while any local minimum is dynamical stable (Theorem \ref{thm:2}).  Furthermore, the Morse index of the stationary solution is equal to the dynamical instability if $\xi\eta_2>k$, where $\eta_2$ denotes the second eigenvalue of $-\Delta$ under the Neumann boundary condition (Theorem \ref{thm:4}).  

Although (\ref{rdm1}) is derived from (\ref{rdm1a}) for the case of $\tau\neq 1$, system (\ref{rdm1a}) itself has a Lyapunov function even for $\tau=1$. This fact was noticed by \cite{jm13} to confirm the existence of global-in-time solution and the spectral comparison property of stationary solutions.  In the following section we shall confirm that the Lyapunov function of $\tau=1$ used by \cite{jm13} is regarded as a limit case under suitable scaling.  

\section{Summary} 

To begin with, we note that mass conservation (\ref{tmc}) takes the form 
\[ \frac{d}{dt}\int_\Omega \xi z+ w\ dx=0, \] 
in $(z,w)$-variable of (\ref{eqn:7}).  Noticing this property,  we set 
\begin{equation} 
\int_\Omega \xi z+ w\ dx=\int_\Omega \xi z_0+ w_0\ dx=\lambda.
 \label{tmc1}
\end{equation} 

To derive the Lyapunov function of (\ref{rdm1}), we multiply the first equation of (\ref{rdm1}) with $z_t$ to obtain 
\begin{equation} 
\Vert z_t\Vert^2_2+\frac{d}{dt}\int_\Omega\frac{D}{2}\vert \nabla z\vert^2-G(z) \ dx =(w_t-D\Delta w+kw,z_t),
 \label{mfe}
\end{equation} 
where 
\[ G(z)=\int_0^zg(z) \ dz, \] 
and $(\cdot,\cdot )$ denotes the $L^2-$inner product. Multiplying the second equation of (\ref{rdm1}) with $w_t-D\Delta w+kw$, next, we obtain 
\begin{eqnarray} 
& & \xi(z_t,w_t-D\Delta w+kw)=(-w_t+\alpha \Delta w,w_t-D\Delta w+kw) =-\Vert w_t\Vert_2^2 \nonumber\\ 
& & \quad -\alpha D\Vert \Delta w\Vert_2^2 -\alpha k\Vert \nabla w\Vert_2^2 -\frac{d}{dt}\int_\Omega\frac{\alpha+D}{2}\vert\nabla w\vert^2+\frac{k}{2}w^2 \ dx. 
 \label{mse}
\end{eqnarray} 
From (\ref{mfe}) and (\ref{mse}) it follows that 
\begin{eqnarray*} 
& & \xi\Vert z_t\Vert^2_2+\Vert w_t\Vert^2_2+\alpha D\Vert\Delta w\Vert_2^2+\alpha k\Vert \nabla w\Vert_2^2 \\ 
& & \quad =-\frac{d}{dt}\int_\Omega \frac{\alpha+D}{2}\vert \nabla w\vert^2+\frac{k}{2}w^2+\frac{\xi D}{2}\vert \nabla z\vert^2-\xi G(z) \ dx. 
\end{eqnarray*} 
Therefore, 
\begin{equation} 
L(z,w)=\int_\Omega\frac{\alpha+D}{2}\vert \nabla w\vert^2+\frac{k}{2}w^2+\frac{\xi D}{2}\vert \nabla z\vert^2-\xi G(z)\ dx, 
 \label{eqn:lya1}
\end{equation} 
is a Lyapunov function with: 
\begin{eqnarray} 
& & \frac{d}{dt}L(z,w) =-\left\{ \xi\Vert z_t\Vert^2_2+\Vert w_t\Vert^2_2+\alpha D\Vert \Delta w\Vert_2^2+\alpha k\Vert \nabla w\Vert_2^2\right\}\leq 0. 
 \label{eqn:lya2}
\end{eqnarray} 

Now we formulate the stationary state of (\ref{rdm1}).  First, 
\[ \alpha\Delta  w=0, \quad \left. \frac{\partial w}{\partial\nu}\right\vert_{\partial\Omega}=0, \] 
holds in the stationary state of (\ref{rdm1}) and hence $w=w(x)$ is a spatially homogeneous function denoted by $w=\overline{w}\in {\bf R}$.  Then the total mass conservation (\ref{tmc1}) implies 
\begin{equation} 
\lambda=\int_\Omega \xi z+ \overline{w} \ dx, 
 \label{eqn:lambda}
\end{equation} 
hence 
\begin{equation} 
\overline{w}=\frac{1}{\vert \Omega\vert}\left( \lambda-\xi\int_\Omega z\right). 
 \label{eqn:ws}
\end{equation} 
Plugging (\ref{eqn:ws}) into the first equation, we see that the stationary state of (\ref{rdm1}) is reduced to a single equation concerning $z=z(x)$, that is, 
\begin{equation} 
-D\Delta z=g(z)+\frac{k}{\vert\Omega\vert}\left(\lambda-\xi\int_\Omega z\right), \quad \left. \frac{\partial z}{\partial\nu}\right\vert_{\partial\Omega}=0. 
 \label{eqn:13}
\end{equation} 
This problem is the Euler-Lagrange equation concerning the functional 
\begin{equation} 
J_\lambda(z)=\int_\Omega \frac{D}{2}\vert \nabla z\vert^2-G(z)-\frac{k\lambda}{\vert\Omega\vert}z \ dx+\frac{k\xi}{2\vert\Omega\vert}\left( \int_\Omega z\ dx\right)^2
 \label{eqn:functional}
\end{equation} 
defined for $z\in H^1(\Omega)$. 

Our point is to regard (\ref{eqn:13}) from the global dynamics of (\ref{rdm1}). First, the Lyapunov function guarantees the global-in-time solution.  Let $(u_0,v_0)\in X=C^2(\overline{\Omega})^2$ and $E_\lambda$ be the set of solutions $z=z(x)$ to (\ref{eqn:13}) for $\lambda\in {\bf R}$ defined by 
\begin{equation} 
\lambda=\int_\Omega \xi z_0+w_0 \ dx=\int_\Omega u_0+\tau v_0 \ dx. 
 \label{eqn:lambda-1}
\end{equation} 

\begin{thm}
If (\ref{parameter}) holds, the solution $(u,v)=(u(\cdot,t), v(\cdot,t))$ to (\ref{rdm1a}) with (\ref{eqn:5}) is global-in-time.  The orbit ${\cal O}=\{ (u(\cdot,t), v(\cdot,t))\}_{t\geq 0}\subset X$ is compact and hence the $\omega$-limit set defined by 
\begin{eqnarray*} 
\omega(u_0, v_0)=\{ (u_\ast, v_\ast) \mid \mbox{$\exists t_k\uparrow +\infty $} 
\quad \mbox{such that\quad $\Vert (u(\cdot,t_k), v(\cdot,t_k))-(u_\ast, v_\ast)\Vert_X=0$}\}, 
\end{eqnarray*} 
is nonempty, compact, and connected.  Furthermore, any $(u_\ast, v_\ast)\in \omega(u_0,v_0)$ admits $z_\ast\in E_\lambda$ such that 
\begin{equation} 
u_\ast=\frac{w_\ast-z_\ast}{D-1}, \quad v_\ast=\frac{Dz_\ast-w_\ast}{D-1}, 
 \label{eqn:19-1}
\end{equation} 
for $w_\ast\in {\bf R}$ defined by 
\begin{equation} 
w_\ast=\frac{1}{\vert\Omega\vert}\left( \lambda-\xi\int_\Omega z_\ast\right). 
 \label{eqn:20}
\end{equation} 
Finally, it holds that 
\begin{equation} 
\lim_{t\uparrow+\infty}\Vert w(\cdot,t)-\langle w(t)\rangle\Vert_{C^2}=0, 
 \label{eqn:t6}
\end{equation} 
for 
\[ \langle w(t)\rangle=\frac{1}{\vert\Omega\vert}\int_\Omega w(\cdot,t). \] 
 \label{gethm}
\end{thm}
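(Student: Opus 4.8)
The plan is to exploit the coercivity of the Lyapunov function $L$ in (\ref{eqn:lya1}) together with parabolic smoothing and the LaSalle invariance principle. First I would observe that, since $u,v\ge 0$ by Theorem \ref{prop:3}, the relevant variable $z=u+v$ is nonnegative, so only the behaviour of $g$ on $[0,\infty)$ enters. As $h$ is bounded on $[0,\infty)$ and $g(z)=(1-D)h(z)-kDz$ is asymptotically $-kDz$, its primitive obeys $G(z)=-\frac{kD}{2}z^2+O(z)$; because $\xi>0$ under (\ref{parameter}), the term $-\xi G(z)$ is bounded below and grows quadratically. Hence $L$ is coercive and bounded below, and the dissipation identity (\ref{eqn:lya2}) gives $L(z(t),w(t))\le L(z_0,w_0)$, yielding uniform-in-time bounds on $\Vert z(t)\Vert_{H^1}$ and $\Vert w(t)\Vert_{H^1}$, and, via non-negativity, on $\Vert u(t)\Vert_{H^1}$ and $\Vert v(t)\Vert_{H^1}$.

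Next I would establish global existence and precompactness of the orbit working in the $(u,v)$-formulation (\ref{rdm1a}), where the reaction $f=h(u+v)+kv$ has at most linear growth on the nonnegative cone since $h$ is bounded. Combining this with the mass constraint $\int_\Omega u+\tau v=\lambda$ and the $H^1$ bounds above, a standard $L^p$--$L^q$ (or Moser) iteration produces uniform-in-time $L^\infty$ bounds on $u,v$; inserting these into the heat semigroup and bootstrapping through parabolic Schauder estimates yields a time-independent bound in $C^{2+\theta}(\overline{\Omega})$ for $t\ge 1$. This excludes finite-time blow-up, so the solution is global, and the compact embedding $C^{2+\theta}\hookrightarrow C^2$ together with continuity on $[0,1]$ shows ${\cal O}$ is precompact in $X$. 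The general theory of precompact semiflow orbits then gives that $\omega(u_0,v_0)$ is nonempty, compact, connected, and invariant.

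To identify the $\omega$-limit set I would apply LaSalle's principle: since $L$ is bounded below and nonincreasing, $L(z(t),w(t))\to L_\infty$, so $L\equiv L_\infty$ on the (invariant) $\omega$-limit set, forcing $\frac{d}{dt}L=0$ there; by (\ref{eqn:lya2}) this means $z_t=0$, $w_t=0$, and $\nabla w=0$ on every $\omega$-limit point. Thus any $(u_\ast,v_\ast)\in\omega(u_0,v_0)$ is a stationary state whose component $w_\ast=Du_\ast+v_\ast$ is a spatial constant $\overline{w}$; the first stationary equation then reduces to (\ref{eqn:13}) for $z_\ast=u_\ast+v_\ast$, so $z_\ast\in E_\lambda$, while mass conservation (\ref{tmc1}) fixes $\overline{w}=w_\ast$ as in (\ref{eqn:20}). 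Inverting the relations $z=u+v$, $w=Du+v$ (possible as $D\ne 1$ under (\ref{parameter})) recovers (\ref{eqn:19-1}).

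Finally, (\ref{eqn:t6}) follows by a compactness argument: were it false, there would exist $t_k\uparrow+\infty$ and $\delta>0$ with $\Vert w(\cdot,t_k)-\langle w(t_k)\rangle\Vert_{C^2}\ge\delta$; precompactness of ${\cal O}$ lets me extract a $C^2$-convergent subsequence whose limit is an $\omega$-limit point, whose $w$-component is the constant $\overline{w}$, forcing $\Vert w(\cdot,t_k)-\langle w(t_k)\rangle\Vert_{C^2}\to 0$, a contradiction. I expect the principal difficulty to be the \emph{uniform-in-time} regularity bootstrap in the second step: the Lyapunov function supplies only $H^1$ control, and upgrading this to a time-independent $C^{2+\theta}$ bound --- while correctly handling the linear-growth reaction and, should one prefer the $(z,w)$-form (\ref{rdm1}), the nonstandard coupling term $w_t-D\Delta w+kw$ --- is the technical heart that secures both precompactness in $C^2$ and the invariance used by LaSalle.
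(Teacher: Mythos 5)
Your proposal is correct and follows essentially the same route as the paper: Lyapunov dissipation of $L$ for uniform $H^1$ bounds, a damped-semigroup $L^p$--$L^q$ iteration to upgrade these to time-independent $L^\infty$ (and then $C^{2+\theta}$) bounds, and compactness plus LaSalle's invariance principle to identify $\omega(u_0,v_0)$ with stationary states satisfying (\ref{eqn:13})--(\ref{eqn:20}); you also correctly single out the uniform-in-time bootstrap as the technical heart, which is exactly where the paper spends its effort (Lemma \ref{lem:6}, iterating the exponent $2N/(N-2)_+\to 2N/(N-6)_+\to\cdots$ with the shifted semigroup $e^{t(D\Delta-\mu)}$). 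The only minor differences are cosmetic: the paper gets the $H^1$ bound from $G(z)\le Cz$ combined with the $L^1$ mass bound rather than from the quadratic coercivity of $-\xi G$ that you invoke, and it bounds $\Vert v\Vert_\infty$ first by a direct comparison argument before starting the iteration on $u$.
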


As we have seen, any stationary solution $(u_\ast,v_\ast)$ to (\ref{rdm1}) takes a critical point $z_\ast\in H^1(\Omega)$ of $J_\lambda(z)$ in (\ref{eqn:functional}) through (\ref{eqn:19-1})-(\ref{eqn:20}). Now we examine its dynamical stability.  The first result follows from the {\it semi-unfolding-minimality} property which is valid between the Lyapunov function $L(u,v)$ and variational functional $J_\lambda(v)$.  This structure of the second model is similar to the one of the first model of (\ref{models}) studied in \cite{ls13}.  

\begin{thm} 
Given $0\leq (u_0,v_0)\in X$, let $z_\ast\in H^1(\Omega)$ be a local minimum of $J_\lambda(z)$ in (\ref{eqn:functional}) for $\lambda$ defined by (\ref{eqn:lambda-1}). Then $(u_\ast,v_\ast)$ derived from (\ref{eqn:19-1})-(\ref{eqn:20}) is a dynamically stable stationary state of (\ref{rdm1a}). 
 \label{thm:2}
\end{thm}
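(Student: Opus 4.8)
The engine of the proof is the \emph{semi-unfolding-minimality} tying the Lyapunov functional $L$ in (\ref{eqn:lya1}) to the reduced functional $J_\lambda$ in (\ref{eqn:functional}). My first step is to minimize, for each fixed $z$, the quantity $L(z,w)$ over all $w\in H^1(\Omega)$ subject to the constraint $\int_\Omega w\,dx=\lambda-\xi\int_\Omega z\,dx$ inherited from (\ref{tmc1}). Since the $w$-part $\int_\Omega\frac{\alpha+D}{2}|\nabla w|^2+\frac{k}{2}w^2\,dx$ is strictly convex and the constraint affine, the unique minimizer is the spatial constant $\overline w=\frac{1}{|\Omega|}(\lambda-\xi\int_\Omega z)$ of (\ref{eqn:ws}); substituting it and expanding $\frac{k}{2|\Omega|}(\lambda-\xi\int_\Omega z)^2$ yields
\[
\min_{w}\,L(z,w)=\xi J_\lambda(z)+\frac{k\lambda^2}{2|\Omega|},
\]
an identity whose additive constant is irrelevant and in which equality at $(z,w)\in M_\lambda:=\{(z,w):\int_\Omega \xi z+w\,dx=\lambda\}$ holds precisely when $w$ is constant.

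From this the local minimality transfers at once. Let $U\subset H^1(\Omega)$ be a neighborhood on which $J_\lambda(z)\ge J_\lambda(z_\ast)$. For $(z,w)\in M_\lambda$ with $z\in U$,
\[
L(z,w)\ \ge\ \xi J_\lambda(z)+\frac{k\lambda^2}{2|\Omega|}\ \ge\ \xi J_\lambda(z_\ast)+\frac{k\lambda^2}{2|\Omega|}\ =\ L(z_\ast,w_\ast),
\]
the last equality because $w_\ast$ in (\ref{eqn:20}) is exactly the constant $\overline w(z_\ast)$. Hence $(z_\ast,w_\ast)$ is a local minimum of $L$ on the invariant manifold $M_\lambda$, and through the linear change of variables (\ref{eqn:7}) so is $(u_\ast,v_\ast)$ under the flow of (\ref{rdm1a}).

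Stability then follows by the classical Lyapunov scheme, fed by Theorem \ref{gethm} and the strict dissipation (\ref{eqn:lya2}). Arguing by contradiction, suppose there are $\epsilon_0>0$ and data $(u_0^n,v_0^n)\to(u_\ast,v_\ast)$ in $X$ whose orbits first reach $\Vert(u^n(t_n),v^n(t_n))-(u_\ast,v_\ast)\Vert_X=\epsilon_0$ at times $t_n$. Continuous dependence over the interval on which $(u_\ast,v_\ast)$ rests as an equilibrium pushes the $t_n$ away from $0$, whereupon the parabolic smoothing underlying the compactness in Theorem \ref{gethm} renders $\{(u^n(t_n),v^n(t_n))\}_n$ precompact in $X$; a subsequence then converges to some $(\hat u,\hat v)$ at distance exactly $\epsilon_0$. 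Monotonicity and continuity of $L$ give $L(\hat z,\hat w)\le\lim_n L(z_0^n,w_0^n)=L(z_\ast,w_\ast)$, while conservation of mass along each orbit, with continuity of the mass functional, returns $(\hat z,\hat w)$ to $M_\lambda$. The second paragraph now forces $L(\hat z,\hat w)=L(z_\ast,w_\ast)$ with equality in the semi-unfolding bound, so $\hat w$ is constant and $\hat z$ minimizes $J_\lambda$; in particular $(\hat z,\hat w)$ is itself a stationary state of (\ref{rdm1}) at distance $\epsilon_0$ from $(z_\ast,w_\ast)$.

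I expect the decisive obstacle to be this last step when the minimum of $J_\lambda$ is not strict, for the mass constraint may in principle carry a continuum of stationary solutions near $z_\ast$. The strict form of (\ref{eqn:lya2}), which vanishes only where $z_t=w_t=0$ and $w$ is constant, is exactly what disposes of the spurious possibility: it certifies that the limit $(\hat u,\hat v)$ is a genuine equilibrium, so a contradiction is reached as soon as $z_\ast$ is isolated in $E_\lambda$ among minimizers; failing isolation, the same estimates deliver Lyapunov stability of the connected minimizing set, which is the natural reading of the conclusion. A subsidiary, routine point is the reconciliation of the $H^1$ variational topology with the $C^2$ topology of $X$, settled by the embedding $C^2(\overline\Omega)\hookrightarrow H^1(\Omega)$ and the continuity of $L$ and $J_\lambda$ on $H^1(\Omega)$, which ensures that a small enough $X$-ball sits inside $U$.
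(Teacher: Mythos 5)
Your first two paragraphs reproduce exactly the paper's Lemma \ref{prop:7} (the semi-unfolding-minimality identity $L(z,w)\geq L(z,\overline w)=\xi J_\lambda(z)+\lambda^2k/2|\Omega|$, with equality iff $w$ is constant), and the overall Lyapunov strategy is the intended one. But the closing argument has a genuine gap, and you have located it yourself: your contradiction scheme produces a limit point $(\hat z,\hat w)$ at distance exactly $\epsilon_0$ which is again a local minimizer of $J_\lambda$ at the same level, and this is only a contradiction if $z_\ast$ is isolated among such minimizers. Since the theorem asserts stability of the point $(u_\ast,v_\ast)$, not of a connected set of minimizers, "failing isolation, stability of the minimizing set" is a weaker statement than what is claimed, and the proof as written does not establish the theorem.

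The paper closes this gap with a different ingredient, Lemma \ref{lem:5}: because $h$ is real analytic, there is $\varepsilon_0>0$ such that every $\varepsilon\in(0,\varepsilon_0/4]$ admits $\delta_0>0$ with the implication
$\Vert z-z_\ast\Vert_{H^1}<\varepsilon_0,\ J_\lambda(z)-J_\lambda(z_\ast)<\delta_0\ \Rightarrow\ \Vert z-z_\ast\Vert_{H^1}<\varepsilon$
(a quantitative, \L{}ojasiewicz--Simon type minimality property, proved as in Lemma 7 of \cite{ls13}). With this in hand no $\omega$-limit extraction is needed at all: the Lyapunov inequality (\ref{eqn:lya2}) together with Lemma \ref{prop:7} gives $J_\lambda(z(\cdot,t))-J_\lambda(z_\ast)<\delta_0$ for all $t$ once the initial energy gap is below $\xi\delta_0$, and a first-exit-time continuity argument (if $\Vert z(\cdot,t_0)-z_\ast\Vert_{H^1}=\varepsilon_0/2$ for some first $t_0$, the lemma forces this norm to be $<\varepsilon\leq\varepsilon_0/4$, a contradiction) keeps $z(\cdot,t)$ within $\varepsilon$ of $z_\ast$ for all time, with no isolation hypothesis. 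You should either import this lemma or prove an equivalent \L{}ojasiewicz-type estimate; without it the argument does not go through for non-isolated minimizers. A secondary, fixable looseness: the precompactness of $\{(u^n(\cdot,t_n),v^n(\cdot,t_n))\}_n$ in $X$ requires the a priori bounds of Section \ref{sec:4} to be uniform over the convergent family of initial data, and the final upgrade from the $H^1$ estimate (\ref{eqn:t7}) to stability in $X=C^2(\overline\Omega)^2$ uses compactness of the orbit, which is the direction opposite to the embedding $C^2(\overline\Omega)\hookrightarrow H^1(\Omega)$ you invoke.
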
 

Finally we pay attention to the linearized stability. We write (\ref{rdm1}) as 
\begin{eqnarray} 
& & (1+D\xi/\alpha)z_t-\xi\alpha w_t=D\Delta z+g(z)+kw, \nonumber\\ 
& & w_t+\xi z_t=\alpha\Delta w, \qquad \qquad \qquad \mbox{in $\Omega\times (0,T)$} \nonumber\\ 
& & \left. \frac{\partial}{\partial \nu}(z,w)\right\vert_{\partial\Omega}=0,  
 \label{eqn:22}
\end{eqnarray} 
recalling $1-D/\alpha=\xi/\alpha$.  Then the linearlized equation of (\ref{eqn:22}) around $(z_\ast, w_\ast)$ is given as 
\[ \frac{\partial}{\partial t}M\left( \begin{array}{l} 
Z \\ 
W \end{array} \right)+{\cal A}_1\left( \begin{array}{l} 
Z \\ 
W \end{array}\right)=0, \quad \left. \frac{\partial}{\partial\nu} \left( \begin{array}{l} 
Z \\ 
W \end{array} \right)\right\vert_{\partial\Omega}=0 \] 
where 
\[ M=\left( \begin{array}{cc} 
1+D\xi/\alpha & -\xi/\alpha \\ 
\xi & 1 \end{array} \right), \quad {\cal A}_1=\left( \begin{array}{cc} 
-D\Delta-g'(z_\ast) & -k \\ 
0 & -\alpha\Delta \end{array} \right). \] 
Therefore, the degree of linearized stability of $(z_\ast, w_\ast)$ to (\ref{rdm1}), or equivalently, that of $(u_\ast, v_\ast)$ to (\ref{rdm1a}), is indicated by the number of eigenvalues with negative real parts of the operator ${\cal A}=M^{-1}{\cal A}_1$. This operator is actually realized in $L^2(\Omega; {\bf C})^2$, the Hilbert space composed of square integrable complex-valued functions on $\Omega$, with the domain 
\begin{eqnarray*} 
& & D({\cal A})=\left\{ \left(\begin{array}{l} 
Z \\ 
W \end{array} \right) \in H^2(\Omega; {\bf C})^2 \right. \\ 
& & \quad \left. \mid \int_\Omega W+\xi Z \ dx=0, \ \left. \frac{\partial}{\partial \nu}\left( \begin{array}{l} 
Z \\ 
W \end{array} \right) \right\vert_{\partial\Omega}=0 \right\}. 
\end{eqnarray*} 
 
This $z_\ast$, on the other hand, is also a stationary state of 
\begin{equation} 
z_t=-\delta J_\lambda(z), 
 \label{eqn:gradient}
\end{equation} 
that is, 
\begin{equation} 
z_t=D\Delta z+g(z)+\frac{k}{\vert\Omega\vert}\left(\lambda-\xi\int_\Omega z\right), \quad \left. \frac{\partial z}{\partial\nu}\right\vert_{\partial\Omega}=0.   
 \label{rdm2}
\end{equation} 
The degree of linearized stability of $z_\ast$ to (\ref{rdm2}), on the other hand, is indicated by the number of negative eigenvalues of ${\cal L}$, the self-adjoint operator in $L^2(\Omega)$ defined by 
\begin{equation} 
{\cal L}\varphi=-\left( D\Delta \varphi+g'(z_\ast)\varphi-\frac{k\xi}{\vert\Omega\vert}\int_\Omega \varphi \right), 
 \label{eqn:26}
\end{equation} 
with the domain 
\[ D({\cal L})=\left\{ \varphi\in H^2(\Omega) \mid \left. \frac{\partial\varphi}{\partial\nu}\right\vert_{\partial\Omega}=0 \right\}. \] 

The following theorem assures that these two Morse indices coincide, provided that 
\begin{equation} 
\xi\eta_2>k, 
 \label{eqn:23}
\end{equation} 
recalling that $\eta_2$ is the second eigenvalue of $-\Delta$ with the Neumann boundary condition.  

\begin{thm} 
Any eigenvalue $\sigma\in {\bf C}$ of ${\cal A}$ in $\mbox{Re} \ \sigma< k/2\xi$ is real and is provided with equal algebraic and geometric multiplicities.  If (\ref{eqn:23}) is the case, furthermore, the numbers of negative and zero eigenvalues of ${\cal A}$ and ${\cal L}$ coincide. 
 \label{thm:4} 
\end{thm}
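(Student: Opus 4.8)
The plan is to convert the generalized eigenvalue problem $\mathcal{A}_1\Phi=\sigma M\Phi$, $\Phi=(Z,W)$, into a scalar one for $Z$. Writing out the two rows gives
\[ (-D\Delta-g'(z_\ast)-\sigma(1+D\xi/\alpha))Z=(k-\sigma\xi/\alpha)W, \qquad (-\alpha\Delta-\sigma)W=\sigma\xi Z. \]
The constraint $\int_\Omega(W+\xi Z)=0$ fixes $\langle W\rangle=-\xi\langle Z\rangle$ (with $\langle\varphi\rangle=\frac{1}{\vert\Omega\vert}\int_\Omega\varphi$), and on the mean-zero subspace the second row gives $P_0W=\sigma\xi(-\alpha\Delta-\sigma)^{-1}P_0Z$ whenever $\sigma$ avoids the Neumann eigenvalues $\alpha\eta_j$ ($j\ge2$) of $-\alpha\Delta$, where $P_0\varphi=\varphi-\langle\varphi\rangle$; this holds for every $\sigma<\alpha\eta_2$, in particular for all $\sigma\le0$. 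Substituting back yields $\mathcal{B}(\sigma)Z=0$ with
\[ \mathcal{B}(\sigma)Z=-D\Delta Z-g'(z_\ast)Z-\sigma(1+D\xi/\alpha)Z+\xi(k-\sigma\xi/\alpha)\langle Z\rangle-\sigma\xi(k-\sigma\xi/\alpha)(-\alpha\Delta-\sigma)^{-1}P_0Z. \]
For real $\sigma<\alpha\eta_2$ the operator $\mathcal{B}(\sigma)$ is self-adjoint on $L^2(\Omega)$, and $\mathcal{B}(0)=\mathcal{L}$. Since $Z$ determines $W$, the geometric multiplicity of $\sigma$ for $\mathcal{A}$ equals $\dim\ker\mathcal{B}(\sigma)$, whereas $Z\equiv0$ forces $\sigma=\alpha\eta_j\ge\alpha\eta_2>0$; hence every eigenvalue with $\mbox{Re}\,\sigma\le0$ has $Z\not\equiv0$ and is detected by $\mathcal{B}$.

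For the first assertion I would work directly with $\mathcal{A}$. Pairing the first row with $\bar Z$ and the second with $\bar W$ in the appropriate inner product and integrating produces two identities; eliminating the cross term $(Z,W)$ and separating imaginary and real parts yields, for a hypothetical non-real eigenvalue ($\mbox{Im}\,\sigma\ne0$, which forces $Z\not\equiv0$ since $Z\equiv0$ would make $\sigma$ a Neumann eigenvalue of $-\alpha\Delta$), a relation of the form
\[ c\,(\mbox{Re}\,\sigma-k/(2\xi))=\vert\sigma\vert^2\,\Xi, \]
with $c\ge0$ and $\Xi>0$ a positive combination of $\Vert Z\Vert_2^2$, $\langle Z\rangle^2$ and $\Vert W\Vert_2^2$. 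The right-hand side being strictly positive, this is impossible once $\mbox{Re}\,\sigma<k/(2\xi)$, so every eigenvalue in that half-plane is real. Equal algebraic and geometric multiplicities there follow by excluding Jordan chains: a generalized eigenvector would, through the same quadratic identities, force the degeneracy $(\mathcal{B}'(\sigma)Z,Z)=0$ that the transversality established below rules out.

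For the index comparison I would run a spectral-flow argument for the self-adjoint family $\mathcal{B}(\sigma)$, $\sigma\in(-\infty,0]$. As $\sigma\to-\infty$ every term of $(\mathcal{B}(\sigma)Z,Z)$ tends to $+\infty$, so $\mathcal{B}(\sigma)$ is positive definite for $\sigma\ll0$ and has no nonpositive eigenvalues there. Labelling its eigenvalues $\mu_1(\sigma)\le\mu_2(\sigma)\le\cdots$, a real $\sigma_0\le0$ is an eigenvalue of $\mathcal{A}$ exactly when $\mu_j(\sigma_0)=0$ for some $j$, with matching multiplicity. At such a crossing the Hellmann--Feynman derivative is
\[ (\mathcal{B}'(\sigma_0)Z_0,Z_0)=-(1+D\xi/\alpha)\Vert Z_0\Vert_2^2-(\xi^2/\alpha)\vert\Omega\vert\langle Z_0\rangle^2-\gamma'(\sigma_0)q-\gamma(\sigma_0)q', \]
where $\gamma(\sigma)=\sigma\xi(k-\sigma\xi/\alpha)$, $q=((-\alpha\Delta-\sigma_0)^{-1}Z_0',Z_0')\ge0$, $q'=\Vert(-\alpha\Delta-\sigma_0)^{-1}Z_0'\Vert_2^2\ge0$ and $Z_0'=P_0Z_0$. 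The resolvent bound $q'\le q/(\alpha\eta_2-\sigma_0)$ on the mean-zero subspace, in which the spectral gap $\eta_2$ enters, keeps this derivative negative, so each branch $\mu_j$ decreases strictly through zero and crosses it at most once. With $\mathcal{B}(0)=\mathcal{L}$ the number of crossings in $(-\infty,0)$ then equals the number of negative eigenvalues of $\mathcal{L}$, and at $\sigma=0$ the kernels correspond; since eigenvalues of $\mathcal{A}$ with $\mbox{Re}\,\sigma\le0$ are real by the first part and the residual eigenvalues $\alpha\eta_j$ are positive, the numbers of negative and of zero eigenvalues of $\mathcal{A}$ and $\mathcal{L}$ coincide. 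The hypothesis $\xi\eta_2>k$ is invoked to control the sign of the coupling factor $(k-\sigma\xi/\alpha)$ relative to these resonances, placing its zero $\sigma=k\alpha/\xi$ ahead of the first resonance $\alpha\eta_2$, which is what renders the crossing correspondence a clean, multiplicity-preserving bijection.

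The main obstacle I anticipate is precisely this monotonicity/bijection step. The nonlocal term $\gamma(\sigma)(-\alpha\Delta-\sigma)^{-1}P_0$ depends on $\sigma$ through both $\gamma$ and the resolvent, and the piece $-\gamma(\sigma_0)q'$ carries the unfavorable sign for $\sigma_0\le0$; controlling it rigorously through the gap estimate together with $\xi\eta_2>k$, and verifying that the scalar reduction is genuinely multiplicity preserving—no generalized eigenvectors hidden in the $W$-component and no loss at the resonances $\sigma=\alpha\eta_j$—is the delicate part on which the whole comparison rests.
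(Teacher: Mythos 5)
Your proposal is correct and is, at its core, the paper's own proof. The elimination of $W$ via $\langle W\rangle=-\xi\langle Z\rangle$ and $P_0W=\sigma\xi(-\alpha\Delta-\sigma)^{-1}P_0Z$ reproduces exactly the paper's scalar reduction (\ref{eqn:68}), your $\mathcal{B}(\sigma)$ being ${\cal L}-\sigma M(-\sigma/\alpha)$ in the paper's notation; the realness argument is the paper's Lemma \ref{lem:6.1}, which yields precisely a relation of your form $c\,(\mbox{Re}\,\sigma-\alpha k/(2\xi))=\vert\sigma\vert^2\,\Xi$ (the paper itself is inconsistent about the factor $\alpha$ between the theorem statement and the lemma); and the Jordan-chain exclusion is the paper's Lemma \ref{lem:6.2}, whose solvability obstruction $(W,\phi_0)>0$ is, term for term, your transversality $-(\mathcal{B}'(\sigma)\phi_0,\phi_0)>0$. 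The only genuine difference is the bookkeeping of the final count: the paper keeps ${\cal L}$ fixed and varies the positive weight $M(s)$, $s=-\sigma/\alpha$, proves the two-sided comparison $C^{-1}\mu_j^\ast\le\mu_j(s)\le C\mu_j^\ast$ (this is where (\ref{eqn:23}) enters, keeping $(k+s\xi)/(\eta_\ell+s)$ nonnegative and bounded down to $s=-k/\xi$), and counts roots of $\mu_j(s)/s=-\alpha$ via strict monotonicity of $\mu_j(s)/s$ and the endpoint limits $-\infty$ and $0$; you instead run a spectral flow for the ordinary eigenvalues of $\mathcal{B}(\sigma)$ on $(-\infty,0]$, using positive definiteness at $-\infty$ and single transversal down-crossings. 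Both routes rest on the same positive quadratic form: the paper's $\frac{\partial}{\partial s}(s\Vert\phi\Vert_s^2)>0$ is exactly your $-(\mathcal{B}'(\sigma)Z,Z)>0$. Your anticipated obstacle does close: with $\gamma(\sigma)=\sigma\xi(k-\sigma\xi/\alpha)$ and the gap bound $q'\le q/(\alpha\eta_2-\sigma_0)$ one gets $-\gamma'(\sigma_0)q-\gamma(\sigma_0)q'\le-\frac{q\xi}{\alpha\eta_2-\sigma_0}\bigl(k\alpha\eta_2-2\sigma_0\xi\eta_2+\sigma_0^2\xi/\alpha\bigr)\le0$ for every $\sigma_0\le0$, so the derivative is strictly negative there without even invoking (\ref{eqn:23}); that hypothesis is needed rather to exclude eigenfunctions with $Z\equiv0$ at the resonance $\sigma=\alpha k/\xi$ and, in the paper's version, to keep the weight uniformly positive on $s\ge-k/\xi$. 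The one caveat, shared with the paper's Lemma \ref{lem:6.2} (which uses $B^{-1/2}$), is that the reduction presupposes $\sigma<\alpha\eta_2$; this is automatic for $\sigma\le0$ but should be stated if you want the first assertion on the whole half-plane.
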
 

Theorem \ref{thm:4} is regarded as a spectral comparison property first observed by \cite{bf90}.  It has been examined for the first model of (\ref{models}) by \cite{mor12} and for the second model with $\tau=1$ by \cite{jm13}. Here we use a similar argument as in \cite{cjm} for the proof. 

Concluding this section, we confirm that the Lyapunov function $L(u,v)$ and stationary state valid to $\tau\neq 1$, that is, (\ref{eqn:lya1}) and (\ref{eqn:13}), respectively, are reduced to those for $\tau=1$ used in \cite{jm13}, under suitable scaling. In the following, we assume $D\neq 1$, because $\tau=D=1$ is the trivial case of (\ref{rd0}).  

First, given $\tau\neq 1$, we define $\hat L(z,w; \tau)$ by 
\[ L(z,w)=\xi\hat L(z,w;\tau), \quad \xi=\xi(\tau)=\frac{1-\tau D}{\tau -1}. \] 
Since 
\begin{eqnarray*} 
& & \lim_{\tau\rightarrow 1}\frac{\alpha+D}{\xi}=\lim_{\tau\rightarrow 1}\left\{\frac{1-D}{1-\tau D}+\frac{D(\tau-1)}{1-\tau D}\right\}=1, \\ 
& & \lim_{\tau\rightarrow 1}\frac{k}{\xi}=0, 
\end{eqnarray*} 
it follows that 
\[ \hat L(z,w)\equiv \lim_{\tau\rightarrow 1}\hat L(z,w;\tau)=\int_\Omega \frac{1}{2}\vert \nabla w\vert^2+\frac{D}{2}\vert \nabla z\vert^2-G(z) \ dx, \] 
which is the Lyapunov function used in \cite{jm13}.  

Next, to derive the limit problem of (\ref{eqn:13}) we take 
\[ \hat\lambda=\lambda/\xi=\int_\Omega z+w/\xi \ dx. \] 
By taking $\tau\rightarrow 1$, it holds that 
\begin{equation} 
\hat\lambda=\int_\Omega z. 
 \label{eqn:18}
\end{equation} 
On the other hand, by $\lambda=\xi\hat \lambda$ we write (\ref{eqn:13}) as 
\[ -D\Delta z=g(z)+\frac{k\xi}{\vert \Omega\vert} \left( \hat \lambda-\int_\Omega z\right), \quad \left. \frac{\partial z}{\partial \nu}\right\vert_{\partial\Omega}=0. \] 
Therefore, we can require the limit problem as $\tau\rightarrow 1$ to be 
\begin{equation} 
-D\Delta z=g(z)+\mu, \quad \left. \frac{\partial z}{\partial \nu}\right\vert_{\partial\Omega}=0 
 \label{eqn:30}
\end{equation} 
with some $\mu\in {\bf R}$.  From the solvability of (\ref{eqn:30}) it follows that 
\[ \mu =-\frac{1}{\vert \Omega\vert}\int_\Omega g. \] 
Hence we end up with 
\begin{equation} 
-D\Delta z=g(z)-\frac{1}{\vert\Omega\vert}\int_\Omega g(z), \quad \left. \frac{\partial z}{\partial \nu}\right\vert_{\partial\Omega}=0. 
 \label{eqn:19}
\end{equation} 

The stationary state of (\ref{rdm1a}) with $\tau=1$ is now formulated by (\ref{eqn:18})-(\ref{eqn:19}), using $z=u+v$.  This is the Euler-Lagrange equation of the functional 
\[ \hat J_{\hat \lambda}(z)=\int_\Omega \frac{D}{2}\vert \nabla z\vert^2-G(z) \ dx, \] 
defined for 
\[ H=\{ z\in H^1(\Omega) \mid \int_\Omega z=\hat \lambda\}. \] 

This paper is composed of six sections.  Theorems \ref{prop:3}, \ref{gethm}, \ref{thm:2}, and \ref{thm:4} are proven in sections \ref{sec:3}, \ref{sec:4}, \ref{sec:5}, and \ref{sec:6}, respectively.  

\section{Proof of Theorem \ref{prop:3}}\label{sec:3}

Letting $z_+=\max\{z, 0\}$, we take the auxiliary system 
\begin{eqnarray} 
& & u_t=D\Delta u-\frac{k(u_++v_+)}{(\alpha_1(u_++v_+)+1)^2}+kv_+, \nonumber\\ 
& & \tau v_t=\Delta v+\frac{k(u_++v_+)}{(\alpha_1(u_++v_+)+1)^2} -kv, \quad \mbox{in $\Omega\times(0,T)$}, \nonumber\\ 
& & \left.\frac{\partial}{\partial \nu}(u,v)\right\vert_{\partial\Omega}=0, \quad \left. (u, v)\right\vert_{t=0}=(u_0(x), v_0(x)), 
 \label{eqn:rdm1b}
\end{eqnarray} 
for (\ref{rdm1a}). We shall show the property (\ref{eqn:positive}) for the solution to (\ref{eqn:rdm1b}).  Then this solution solves (\ref{rdm1a}). Hence it coincides with the solution to (\ref{rdm1a}) because of the uniqueness of the latter.  Thus Theorem \ref{prop:3} will be proven. 

In fact, the first equation of (\ref{eqn:rdm1b}) implies 
\[ u_t\geq D\Delta u-ku_+, \quad \left.\frac{\partial u}{\partial \nu}\right\vert_{\partial\Omega}=0, \quad \left. u\right\vert_{t=0}=u_0(x)\geq 0. \] 
Then we obtain $u=u(\cdot,t)\geq 0$ by the maximum principle.   From the second equation of (\ref{eqn:rdm1b}), on the other hand, it holds that 
\[ \tau v_t\geq \Delta v-kv, \quad \left. \frac{\partial v}{\partial \nu}\right\vert_{\partial\Omega}=0, \quad \left. v\right\vert_{t=0}=v_0(x)\geq 0. \] 
Then $v(\cdot,t)\geq0$ follows. 

\begin{rmk} 
Concerning (\ref{rdm1a}), we have $(u,v)=(u(\cdot,t), v(\cdot,t))\geq 0$, provided that (1.3) of \cite{jm13} holds, that is, $h\in C^1[0,\infty)$, $h(s)/s+k\geq 0$ for $s>0$, and $\lim_{s\downarrow 0}h(s)/s=-\beta<0$.  
\end{rmk} 

\section{Proof of Theorem \ref{gethm}}\label{sec:4}

In this section we will prove several a priori estimates. Henceforth, $C_i$, $i=1,2,\cdots, 19$ denote positive constants independent of $t$. 

The first observation is the inequality 
\begin{equation} 
\Vert u(\cdot,t)\Vert_1+\Vert v(\cdot,t)\Vert_1\leq C_1, 
 \label{eqn:l1}
\end{equation} 
which follows from (\ref{tmc1}) and $\xi>0$. Now we show the following lemma. 
\begin{lem} 
It holds that 
\begin{equation} 
\Vert v(\cdot,t)\Vert_\infty\leq C_2. 
 \label{eqn:v-inf}
\end{equation} 
 \label{lem:4}
\end{lem}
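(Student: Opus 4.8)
The plan is to observe that, once non-negativity is available, the reaction term in the $v$-equation is bounded \emph{above} by a constant, so that $v$ obeys a scalar parabolic differential inequality with bounded forcing and linear dissipation, to which a spatially homogeneous supersolution can be compared. By Theorem \ref{prop:3} we have $u,v\geq 0$, hence $u+v\geq 0$, and recalling $h,k$ from (\ref{eqn:5}) the second equation of (\ref{rdm1a}) reads
\[ \tau v_t=\Delta v+\frac{\alpha_1(u+v)}{(\alpha_2(u+v)+1)^2}-\alpha_1 v, \quad \left.\frac{\partial v}{\partial\nu}\right\vert_{\partial\Omega}=0. \]
The key elementary fact is that $s\mapsto s/(\alpha_2 s+1)^2$ attains its maximum $1/(4\alpha_2)$ at $s=1/\alpha_2$ over $s\geq 0$, so that the nonlinear term is controlled pointwise,
\[ 0\leq \frac{\alpha_1(u+v)}{(\alpha_2(u+v)+1)^2}\leq \frac{\alpha_1}{4\alpha_2}, \]
\emph{independently of} $u$. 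This is precisely what decouples the bound for $v$ from that of $u$, and it yields the differential inequality
\[ \tau v_t-\Delta v+\alpha_1 v\leq \frac{\alpha_1}{4\alpha_2}, \quad \left.\frac{\partial v}{\partial\nu}\right\vert_{\partial\Omega}=0. \]

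Next I would introduce the spatially homogeneous supersolution $\overline{v}=\overline{v}(t)$ solving the ODE
\[ \tau\overline{v}'+\alpha_1\overline{v}=\frac{\alpha_1}{4\alpha_2}, \quad \overline{v}(0)=\Vert v_0\Vert_\infty, \]
which, being constant in $x$, satisfies the inequality above with equality and meets the Neumann condition trivially. Setting $w=v-\overline{v}$ gives $\tau w_t-\Delta w+\alpha_1 w\leq 0$ with $w(\cdot,0)=v_0-\Vert v_0\Vert_\infty\leq 0$ and homogeneous Neumann data; since the zeroth-order coefficient $\alpha_1$ is positive, the parabolic maximum principle forces $w\leq 0$, that is $v\leq \overline{v}$ on $\overline{\Omega}\times[0,\infty)$.

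Finally, solving the ODE explicitly,
\[ \overline{v}(t)=\frac{1}{4\alpha_2}+\left(\Vert v_0\Vert_\infty-\frac{1}{4\alpha_2}\right)e^{-\alpha_1 t/\tau}\leq \max\left\{\Vert v_0\Vert_\infty,\ \frac{1}{4\alpha_2}\right\}, \]
so that (\ref{eqn:v-inf}) holds with $C_2=\max\{\Vert v_0\Vert_\infty,\ 1/(4\alpha_2)\}$, uniformly in $t\geq 0$. The whole argument hinges on the uniform upper bound for the saturating nonlinearity; the remaining supersolution comparison is routine, and it is the damping term $-\alpha_1 v$ that renders the estimate time-uniform rather than merely locally bounded. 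I would expect the only point requiring care to be the justification of the maximum principle under the Neumann boundary condition, which is standard.
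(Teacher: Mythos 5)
Your proof is correct and follows essentially the same route as the paper: both arguments rest on the uniform upper bound for the saturating term $-h(u+v)=\alpha_1(u+v)/(\alpha_2(u+v)+1)^2\leq \alpha_1/(4\alpha_2)$ (the paper writes this abstractly as $0\geq h(z)\geq -C_3$) together with the damping $-kv$, followed by a supersolution comparison. The only cosmetic difference is that the paper realizes the supersolution via the Duhamel formula and the $L^\infty$-contraction of $e^{t\Delta}$, whereas you use an explicit spatially homogeneous ODE supersolution and the parabolic maximum principle.
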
  
\begin{proof} 
Since 
\begin{equation} 
0\geq h(z)\geq -C_3, \quad z\geq 0, 
 \label{eqn:hu}
\end{equation} 
we have 
\[ \tau v_t\leq \Delta v+C_3-kv, \quad \left. \frac{\partial v}{\partial\nu}\right\vert_{\partial\Omega}=0, \quad \left.v\right\vert_{t=0}=v_0(x)\geq 0, \] 
which implies 
\begin{eqnarray*} 
& & 0\leq v(\cdot,t)\leq \overline{v}(\cdot,t) \\ 
& & \overline{v}(\cdot,t)=e^{t\tau^{-1}(\Delta-k)}v_0+\int_0^te^{(t-t')\tau^{-1}(\Delta-k)}C_3\tau^{-1} \ dt'. 
\end{eqnarray*}  
From the maximum principle in the form of 
\[ \Vert e^{t\Delta}\phi\Vert_\infty\leq \Vert \phi\Vert_\infty, \] 
we obtain 
\[ \Vert \overline{v}(\cdot,t)\Vert_\infty \leq C_4, \] 
and hence (\ref{eqn:v-inf}). 
\end{proof} 

\begin{lem} 
We have 
\begin{equation} 
\Vert z(\cdot,t)\Vert_{H^1}^2+\Vert w(\cdot,t)\Vert_2^2+\int_0^t\Vert z_t(\cdot,t')\Vert_2^2+\Vert \nabla w(\cdot,t')\Vert_2^2 \ dt' \leq C_5. 
 \label{eqn:est1}
\end{equation} 
\end{lem}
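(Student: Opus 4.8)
The plan is to derive the estimate (\ref{eqn:est1}) directly from the Lyapunov structure (\ref{eqn:lya2}) together with the already-established bounds (\ref{eqn:l1}) and (\ref{eqn:v-inf}). Integrating (\ref{eqn:lya2}) in time gives
\[
L(z(\cdot,t),w(\cdot,t))+\int_0^t \xi\Vert z_t\Vert_2^2+\Vert w_t\Vert_2^2+\alpha D\Vert \Delta w\Vert_2^2+\alpha k\Vert \nabla w\Vert_2^2 \ dt' = L(z_0,w_0),
\]
so that $L(z(\cdot,t),w(\cdot,t))\leq L(z_0,w_0)=C$ for all $t\geq 0$, and the time-integral of $\xi\Vert z_t\Vert_2^2+\alpha k\Vert\nabla w\Vert_2^2$ is bounded by the same constant. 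Since $\xi,\alpha,k,D>0$, this immediately controls $\int_0^t \Vert z_t\Vert_2^2+\Vert\nabla w\Vert_2^2 \ dt'$, giving the last two terms on the left of (\ref{eqn:est1}).

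Next I would extract the pointwise-in-time bounds from $L\leq C$. By definition (\ref{eqn:lya1}),
\[
\frac{\alpha+D}{2}\Vert\nabla w\Vert_2^2+\frac{k}{2}\Vert w\Vert_2^2+\frac{\xi D}{2}\Vert\nabla z\Vert_2^2-\xi\int_\Omega G(z)\ dx \leq C.
\]
The two gradient terms and the $\Vert w\Vert_2^2$ term carry favorable signs, so the only obstacle is the indefinite term $-\xi\int_\Omega G(z)$. The key point is that $g(z)=(1-D)h(z)-kDz$ with $h$ bounded on $[0,\infty)$ by (\ref{eqn:hu}), so $G(z)=\int_0^z g$ grows at most quadratically with a negative leading coefficient when $-kD<0$; more precisely $G(z)\leq C_6 z$ for $z\geq 0$ using $|h|\le C_3$, whence $-\xi\int_\Omega G(z)\geq -\xi C_6\Vert z\Vert_1$. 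Combined with the $L^1$ bound (\ref{eqn:l1}) (note $z=u+v$, so $\Vert z\Vert_1\leq C_1$), this shows that the indefinite term is in fact bounded below by a constant. Rearranging then yields
\[
\Vert\nabla w\Vert_2^2+\Vert w\Vert_2^2+\Vert\nabla z\Vert_2^2\leq C_7
\]
uniformly in $t$, which supplies $\Vert w\Vert_2^2$ and $\Vert\nabla z\Vert_2^2$.

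It remains to upgrade $\Vert\nabla z\Vert_2^2\leq C_7$ to the full $H^1$ bound $\Vert z\Vert_{H^1}^2\leq C$; for this I need $\Vert z\Vert_2$. I expect this to be the main technical point, but it follows by interpolation: the Gagliardo–Nirenberg / Poincaré–type inequality controls $\Vert z\Vert_2$ by $\Vert\nabla z\Vert_2$ and $\Vert z\Vert_1$, both of which are now bounded by $C_7$ and $C_1$. Writing $z=\langle z\rangle+(z-\langle z\rangle)$ with mean $\langle z\rangle=|\Omega|^{-1}\int_\Omega z$, Poincaré's inequality gives $\Vert z-\langle z\rangle\Vert_2\leq C\Vert\nabla z\Vert_2$, while $|\langle z\rangle|\leq |\Omega|^{-1}\Vert z\Vert_1$ is controlled by (\ref{eqn:l1}). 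Adding the gradient bound then closes $\Vert z\Vert_{H^1}^2\leq C_5$, and combining all the pieces above establishes (\ref{eqn:est1}). The only genuine obstacle is the sign/growth analysis of $G(z)$; once the upper bound $G(z)\leq C_6 z$ is in hand, everything else is a routine assembly of the Lyapunov dissipation identity, the $L^1$ conservation bound, and Poincaré's inequality.
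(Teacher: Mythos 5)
Your proof is correct and follows essentially the same route as the paper: integrate the Lyapunov identity \eqref{eqn:lya2}, bound $G(z)\leq Cz$ for $z\geq 0$ via the boundedness of $h$, and control the indefinite term $-\xi\int_\Omega G(z)$ by the $L^1$ mass bound \eqref{eqn:l1}. The only difference is that you spell out the Poincar\'e/interpolation step recovering $\Vert z\Vert_2$ from $\Vert\nabla z\Vert_2$ and $\Vert z\Vert_1$, which the paper leaves implicit.
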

\begin{proof}
First, (\ref{eqn:lya2}) implies 
\begin{eqnarray*} 
& & L(z(\cdot,t),w(\cdot,t))+\int_0^t\xi\Vert z_t(\cdot,t')\Vert^2_2+\Vert w_t(\cdot,t')\Vert^2_2 \\ 
& & \quad +\alpha D\Vert \Delta w(\cdot, t')\Vert_2^2+k\alpha\Vert \nabla w(\cdot,t')\Vert_2^2 \ dt' = L(z_0,w_0). 
\end{eqnarray*} 
By (\ref{eqn:7}) and (\ref{eqn:hu}), we have 
\begin{equation} 
g(z)\leq (1+D)C_3, \quad z\geq 0.  
 \label{eqn:33}
\end{equation} 
In (\ref{eqn:lya1}), therefore, it holds that 
\[ G(z)\leq (1+D)C_3z, \quad z\geq 0. \] 
Then (\ref{eqn:l1}) implies (\ref{eqn:est1}). 
\end{proof}

\begin{lem}
It holds that 
\begin{equation} 
\Vert w(\cdot,t)\Vert_\infty\leq C_6. 
 \label{eqn:w2}
\end{equation} 
 \label{lem:6} 
\end{lem}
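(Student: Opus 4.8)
The plan is to reduce the $L^\infty$ bound on $w$ to one on $u$, using the relation $w=Du+v$ from (\ref{eqn:7}) together with Lemma \ref{lem:4}. Since $u,v\geq 0$ by Theorem \ref{prop:3}, we have $0\leq w=Du+v\leq D\Vert u(\cdot,t)\Vert_\infty+\Vert v(\cdot,t)\Vert_\infty$, so in view of (\ref{eqn:v-inf}) it suffices to exhibit a constant, independent of $t$, bounding $\Vert u(\cdot,t)\Vert_\infty$; then (\ref{eqn:w2}) follows at once.

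First I would record that $u$ solves the scalar Neumann problem
\[ u_t=D\Delta u+R, \quad R:=h(u+v)+kv, \quad \left.\frac{\partial u}{\partial\nu}\right\vert_{\partial\Omega}=0, \]
whose forcing is uniformly bounded. Indeed $z=u+v\geq 0$, so (\ref{eqn:hu}) gives $-C_3\leq h(z)\leq 0$, while $0\leq v\leq C_2$ by Lemma \ref{lem:4}; hence $\vert R(\cdot,t)\vert\leq C_7$ with $C_7=\max\{C_3,kC_2\}$ independent of $t$.

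The delicate point is that a naive variation-of-constants estimate run from $t=0$, namely $\Vert u(\cdot,t)\Vert_\infty\leq\Vert u_0\Vert_\infty+\int_0^t\Vert R\Vert_\infty\,ds\leq\Vert u_0\Vert_\infty+C_7t$, only yields linear growth, since a bounded forcing can a priori pump the spatial average of $u$ upward. Here mass conservation intervenes: the average of $u$ cannot grow, as reflected in the uniform $L^1$ bound (\ref{eqn:l1}). To convert this into an $L^\infty$ bound I would run Duhamel's formula on a sliding window of unit length: for $t\geq 1$,
\[ u(\cdot,t)=e^{D\Delta}u(\cdot,t-1)+\int_{t-1}^t e^{(t-s)D\Delta}R(\cdot,s)\,ds. \]
The integral term is estimated by the maximum principle $\Vert e^{\sigma D\Delta}\phi\Vert_\infty\leq\Vert\phi\Vert_\infty$ already used in Lemma \ref{lem:4}, contributing at most $C_7$. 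For the first term I would invoke the smoothing of the Neumann heat semigroup over the fixed unit time, $\Vert e^{D\Delta}\phi\Vert_\infty\leq C\Vert\phi\Vert_1$, valid because the Neumann heat kernel is bounded at time one; combined with (\ref{eqn:l1}) this gives $\Vert e^{D\Delta}u(\cdot,t-1)\Vert_\infty\leq CC_1$ uniformly in $t$. For $0\leq t\leq 1$ the classical solution is continuous in $X$, so $\sup_{0\leq t\leq 1}\Vert u(\cdot,t)\Vert_\infty$ is finite; taking the larger constant bounds $\Vert u(\cdot,t)\Vert_\infty$ for all $t\geq 0$.

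The main obstacle is exactly the linear-growth trap above, and its resolution is the sliding-window argument, which trades the dangerous long-time accumulation of the bounded forcing for the harmless unit-time accumulation, the conserved mass serving only to keep the restarting datum $u(\cdot,t-1)$ bounded in $L^1$ uniformly in $t$. If one prefers to avoid the sharp $L^1\to L^\infty$ smoothing, the identical scheme works with the uniform $L^2$ bound on $u$ furnished by $\Vert z\Vert_{H^1}\leq C_5^{1/2}$ in (\ref{eqn:est1}) and $\Vert v\Vert_2\leq C_2\vert\Omega\vert^{1/2}$, using $\Vert e^{D\Delta}\phi\Vert_\infty\leq C\Vert\phi\Vert_2$ instead.
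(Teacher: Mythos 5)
Your proof is correct, but it takes a genuinely different and more elementary route than the paper's. The paper attacks $w$ through its own equation $w_t+\xi z_t=\alpha\Delta w$, treating $-\xi z_t$ as a forcing controlled only in $L^2_tL^2_x$ via the Lyapunov identity (\ref{eqn:est1}); this yields $\Vert w\Vert_r$ only for subcritical $r$, and the authors must then bootstrap the integrability of $u$ through the comparison $u_t\leq D\Delta u+kv$ with the shifted semigroup $e^{(D\Delta-\mu)t}$, iterating a number of times that grows with the dimension $N$ before reaching $\Vert u\Vert_\infty\leq C_{13}$ and hence (\ref{eqn:w2}). You instead observe that the \emph{entire} reaction term $R=h(u+v)+kv$ in the $u$-equation is already uniformly bounded in $L^\infty$ --- by the two-sided bound (\ref{eqn:hu}) on the positive cone (Theorem \ref{prop:3}) together with Lemma \ref{lem:4} --- and you defuse the only remaining danger, linear-in-time accumulation of the bounded forcing, with a unit-length sliding-window Duhamel formula anchored by the $L^1\to L^\infty$ smoothing of the Neumann heat semigroup at time one and the mass bound (\ref{eqn:l1}). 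This is shorter, dimension-independent, and dispenses with the bootstrap entirely; the paper's route is heavier but leans less on the global boundedness of $h$, extracting everything from the energy estimates and the sign $h\leq 0$ at the bootstrap stage. Two small polish points: for $t\in[0,1]$ you should not appeal to continuity of the orbit (which presupposes the solution lives that long, circular in an a priori estimate) but simply note that the naive bound $\Vert u_0\Vert_\infty+C_7t$ already covers $0\leq t<\min(1,T)$; and the $L^1\to L^\infty$ smoothing you invoke is exactly the cited estimate (\ref{eqn:sge}) with $q=1$, $r=\infty$ evaluated at time one, so no new semigroup input is needed.
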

\begin{proof}
Taking $\mu>0$, we write the second equation of (\ref{rdm1}) as 
\[ w_t=(\alpha\Delta-\mu) w+\mu w-\xi z_t, \quad \left.\frac{\partial w}{\partial \nu}\right\vert_{\partial\Omega}=0, \quad \left. w\right\vert_{t=0}=w_0(x). \] 
Then it follows that 
\begin{equation} 
w(\cdot,t)=e^{t(\alpha\Delta-\mu)}w_0 +\int_0^te^{(t-t')(\alpha\Delta-\mu)}[\mu w(\cdot,t')-\xi z_t(\cdot,t')] \ dt'. 
 \label{eqn:w}
\end{equation} 
To estimate the second term on the right-hand side of (\ref{eqn:w}), we use the semigroup estimate (see \cite{rothe}) 
\begin{equation} 
\Vert e^{t\Delta}\phi\Vert_r\leq C_7(q,r)\max\{ 1, t^{-\frac{N}{2}(\frac{1}{q}-\frac{1}{r})}\}\Vert \phi\Vert_q, \quad 1\leq q\leq r\leq \infty, 
 \label{eqn:sge}
\end{equation} 
recalling that $N$ is the space dimension.  

First, we apply this to $q=2$ and $r=\infty$ for $N=1$ and $1\leq r<\frac{2N}{(N-2)_+}$ for $N\geq 2$.  Then it follows that 
\[ \frac{N}{2}(\frac{1}{2}-\frac{1}{r})<\frac{1}{2}, \] 
and hence 
\begin{eqnarray} 
& & \Vert w(\cdot,t)\Vert_r\leq C_8\Vert w_0\Vert_r +C_8\int_0^t(t-t')^{-\frac{N}{2}(\frac{1}{2}-\frac{1}{r})}e^{-\mu(t-t')}(\Vert w(\cdot,t')\Vert_2 \nonumber\\ 
& & \quad +\Vert z_t(\cdot,t')\Vert_2) \ dt' \leq C_9, 
 \label{eqn:36}
\end{eqnarray} 
from (\ref{eqn:w}).  

If $N\geq 2$ we use also 
\begin{equation} 
\Vert z(\cdot,t)\Vert_r\leq C_{10}, \quad 1\leq r<\frac{2N}{(N-2)_+}, 
 \label{eqn:37}
\end{equation} 
derived from (\ref{eqn:est1}), which implies 
\begin{equation} 
\Vert u(\cdot,t)\Vert_q\leq C_{11}, \quad 1\leq q<\frac{2N}{(N-2)_+}, 
 \label{eqn:38}
\end{equation} 
by (\ref{eqn:v-inf}).  Using (\ref{eqn:hu}), now we have 
\[ u_t\leq D\Delta u+kv, \quad \left.\frac{\partial u}{\partial\nu}\right\vert_{\partial\Omega}=0. \] 
Then it holds that 
\begin{equation} 
0\leq u(\cdot,t)\leq \overline{u}(\cdot,t), 
 \label{eqn:39}
\end{equation} 
for 
\begin{eqnarray*} 
& & \overline{u}(\cdot,t)=e^{(D\Delta-\mu)t}u_0+\int_0^te^{(D\Delta-\mu)(t-t')}[\mu u(\cdot,t')+kv(\cdot,t')] \ dt',  
\end{eqnarray*} 
where the semigroup estimate (\ref{eqn:sge}) is applicable. 

From (\ref{eqn:est1}), (\ref{eqn:38}), and (\ref{eqn:39}) it thus follows that 
\[ \Vert \overline{u}(\cdot,t)\Vert_r\leq C_{12}, \] 
for $1\leq r\leq\infty$ satisfying 
\[ \frac{N}{2}(\frac{1}{q}-\frac{1}{r})<1. \] 
Thus we obtain 
\begin{equation} 
\Vert u(\cdot,t)\Vert_\infty\leq C_{13}, 
 \label{eqn:40}
\end{equation} 
for $N\leq 5$, while (\ref{eqn:38}) is improved as 
\[ \Vert u(\cdot,t)\Vert_q\leq C_{14}, \quad 1\leq q<\frac{2N}{(N-6)_+}, \] 
for $N\geq 6$.  Continuing this procedure, we reach (\ref{eqn:40}) for any $N$ and then (\ref{eqn:w2}) follows from (\ref{eqn:est1}).  
\end{proof}

\begin{proof}[Proof of Theorem \ref{gethm}]
By Lemmas \ref{lem:4} and \ref{lem:6} we have 
\[ \Vert (u(\cdot,t), v(\cdot,t)\Vert_\infty\leq C_{15}. \] 
This implies $T=+\infty$ and also compactness of the orbit 
\[ {\cal O}=\{ (u(\cdot,t), v(\cdot,t))\}_{t\geq 0}\subset C^2(\overline{\Omega})^2.  \] 
From the general theory (see \cite{henry, hale88}) the $\omega$-limit set $\omega(u_0,v_0)$ is non-empty, compact, connected, and invariant under the flow defined by (\ref{rdm1a}), while $L(z, w)$ is constant on $\omega(u_0,v_0)$.  

Given $(u_\ast, v_\ast)\in \omega(u_0, v_0)$, let $(\tilde u, \tilde v)=(\tilde u(\cdot,t), \tilde v(\cdot,t))$ be the solution to (\ref{rdm1a}) for $(u_0,v_0)=(u_\ast, v_\ast)$ and 
\[ \tilde w=D\tilde u+\tilde v, \quad \tilde z=\tilde u+\tilde v.  \] 
From the above property we have 
\[ \frac{d}{dt}L(\tilde z(\cdot,t), \tilde w(\cdot,t))=0, \quad t\geq 0, \] 
and then it follows that 
\[ \tilde z_t=0,\quad\tilde w_t=0,\quad \nabla \tilde w=0,  \] 
from (\ref{eqn:lya2}).  Hence we have 
\[ D\Delta z_\ast+kw_\ast+g(z_\ast)=0, \quad \left. \frac{\partial z_\ast}{\partial\nu}\right\vert_{\partial\Omega}=0, \] 
and $w_\ast\in {\bf R}$.  This $w_\ast$ is determined by the total mass 
\[ \lambda=\int_\Omega \xi z_\ast+w_\ast \ dx, \] 
for $\lambda$ in (\ref{eqn:lambda-1}).  Then (\ref{eqn:20}) follows, and $z=z_\ast$ is a solution to (\ref{eqn:13}).  

Since each $(u_\ast, v_\ast)\in \omega(u_0, v_0)$ satisfies $w_\ast=Du_\ast+v_\ast\in {\bf R}$, it holds that 
\[ \lim_{t\uparrow+\infty}\Vert \nabla w(\cdot,t)\Vert_{C^1}=0. \] 
Then we obtain (\ref{eqn:t6}).  
\end{proof} 

\section{Proof of Theorem \ref{thm:2}}\label{sec:5}

We have derived (\ref{eqn:gradient}) by reducing the second equation of (\ref{rdm1}) to the stationary state.  This process is valid even in the variational level,  that is, between the functionals $L(z,w)$ and $J_\lambda(z)$. In Lemma \ref{prop:7} below, we shall show the semi-unfolding-minimality property, observed in several models in non-equilibrium thermodynamics \cite{is06, is08, st09, st10b, sy07, sy12, pst12} (see also \cite{suz08}).  

For the moment we regard $L(z,w)$ and $J_\lambda(z)$ as smooth functionals of $(z,w)\in H^1(\Omega)\times H^1(\Omega)$ and $z\in H^1(\Omega)$ defined by (\ref{eqn:lya1}) and (\ref{eqn:functional}), respectively.  

\begin{lem} 
Given $\lambda\in{\bf R}$, let $(z,w)\in H^1(\Omega)\times H^1(\Omega)$ satisfy 
\[ \int_\Omega\xi z+w \ dx=\lambda, \] 
and define $\overline{w}\in {\bf R}$ by (\ref{eqn:ws}).  Then it holds that 
\begin{equation}
L(z,w)\geq L(z,\overline{w})=\xi J_\lambda(z)+\frac{\lambda^2 k}{2\vert\Omega\vert}. 
 \label{eqn:sum}
\end{equation} 
 \label{prop:7}
\end{lem}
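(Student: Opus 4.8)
The plan is to split the assertion into the inequality $L(z,w)\geq L(z,\overline{w})$ and the identity $L(z,\overline{w})=\xi J_\lambda(z)+\frac{\lambda^2 k}{2\vert\Omega\vert}$, and to dispatch each by direct computation. The one observation that organizes everything is that the constraint $\int_\Omega \xi z+w\ dx=\lambda$ is precisely equivalent, via (\ref{eqn:ws}), to saying that the constant $\overline{w}$ equals the spatial mean $\frac{1}{\vert\Omega\vert}\int_\Omega w\ dx$ of $w$.

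For the inequality, I would first note that the $z$-dependent part $\frac{\xi D}{2}\vert\nabla z\vert^2-\xi G(z)$ of $L$ in (\ref{eqn:lya1}) is common to $L(z,w)$ and $L(z,\overline{w})$, and that $\nabla\overline{w}=0$ since $\overline{w}$ is constant. Hence the difference reduces to
\[ L(z,w)-L(z,\overline{w})=\frac{\alpha+D}{2}\int_\Omega\vert\nabla w\vert^2\ dx+\frac{k}{2}\int_\Omega (w^2-\overline{w}^2)\ dx. \]
Because $\overline{w}$ is the mean of $w$, the elementary identity $\int_\Omega w^2\ dx-\vert\Omega\vert\overline{w}^2=\int_\Omega (w-\overline{w})^2\ dx$ shows the second integral is nonnegative; since $\alpha,D>0$ by (\ref{parameter}) and $k=\alpha_1>0$, both terms are nonnegative and the inequality follows (with equality exactly when $w$ is constant).

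For the identity, I would substitute $\nabla\overline{w}=0$ into (\ref{eqn:lya1}) to obtain $L(z,\overline{w})=\int_\Omega\frac{\xi D}{2}\vert\nabla z\vert^2-\xi G(z)\ dx+\frac{k}{2}\vert\Omega\vert\overline{w}^2$, then expand $\overline{w}^2=\vert\Omega\vert^{-2}(\lambda-\xi\int_\Omega z)^2$ using (\ref{eqn:ws}). Expanding $\xi J_\lambda(z)$ from (\ref{eqn:functional}) and adding $\frac{\lambda^2 k}{2\vert\Omega\vert}$, I would then match the three resulting groups of terms: the common gradient-and-$G$ integral, the linear term $-\frac{k\xi\lambda}{\vert\Omega\vert}\int_\Omega z$, and the quadratic term $\frac{k\xi^2}{2\vert\Omega\vert}(\int_\Omega z)^2$, together with the free constant $\frac{k\lambda^2}{2\vert\Omega\vert}$, verifying that the two expressions coincide.

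There is no genuine analytic obstacle here; the argument is entirely algebraic, and the only care needed is the bookkeeping in the final expansion, checking that the cross term and the quadratic term generated by expanding $\frac{k}{2}\vert\Omega\vert\overline{w}^2$ reproduce exactly those appearing in $\xi J_\lambda(z)$. The conceptual content is the mean-value reading of $\overline{w}$, which simultaneously makes the inequality a statement of minimization over the $w$-variable at fixed mass and identifies $L(z,\overline{w})$ with the reduced functional $J_\lambda$.
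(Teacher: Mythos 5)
Your proposal is correct and follows essentially the same route as the paper: identify $\overline{w}$ as the spatial mean of $w$ via the mass constraint, deduce $\int_\Omega w^2\geq\vert\Omega\vert\,\overline{w}^2$ (the paper cites Jensen's inequality where you use the equivalent variance identity), and verify the identity $L(z,\overline{w})=\xi J_\lambda(z)+\frac{\lambda^2k}{2\vert\Omega\vert}$ by direct expansion of $\overline{w}^2$. No substantive difference.
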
 
\begin{proof}
We have 
\[ \overline{w}=\frac{1}{\vert \Omega\vert}\int_\Omega w, \] 
and hence 
\[ \int_\Omega w^2\geq \int_\Omega \overline{w}^2, \] 
by Jensen's inequality.  Then $L(z,w)\geq L(z,\overline{w})$ follows.  

The second identity of (\ref{eqn:sum}) is now derived as 
\[ L(z,\overline{w}) = \frac{1}{2}\int_\Omega k\overline{w}^2+\xi D|\nabla z|^2-2\xi G(z)\ dx = \xi J_\lambda(z)+\frac{\lambda^2k}{2\vert\Omega\vert}. \] 
\end{proof}

The following lemma holds because $h=h(z)$ is real analytic in $z\geq 0$.  The proof is similar to Lemma 7 of \cite{ls13} and is omitted. 
\begin{lem}  
Let $z_\ast=z_\ast(x)$ be a local minimum of $J_\lambda(z)$, $z\in H^1(\Omega)$, defined by  (\ref{eqn:functional}), where $h=h(z)$ is a real-analytic function of $z\in {\bf R}$. Then there is $\varepsilon_0>0$ such that any $\varepsilon\in (0,\varepsilon_0/4]$ admits $\delta_0>0$ such that 
\begin{equation} 
\Vert z-z_\ast\Vert_{H^1}<\varepsilon_0, \ J_\lambda(z)-J_\lambda(z_\ast)<\delta_0 \quad \Rightarrow \quad \Vert z-z_\ast\Vert_{H^1}<\varepsilon. 
 \label{eqn:ista}
\end{equation} 
 \label{lem:5}
\end{lem}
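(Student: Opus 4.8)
The plan is to deduce (\ref{eqn:ista}) from the \emph{\L{}ojasiewicz--Simon gradient inequality}, which is exactly the tool that the real-analyticity hypothesis on $h$ makes available. It is convenient first to reformulate the claim: (\ref{eqn:ista}) is equivalent to the positivity of the annular infimum
\[ m(\varepsilon)=\inf\left\{ J_\lambda(z)-J_\lambda(z_\ast) \ : \ \varepsilon\le\|z-z_\ast\|_{H^1}<\varepsilon_0 \right\}, \]
since one may then take $\delta_0=m(\varepsilon)$: if the energy gap is below $m(\varepsilon)$ and $\|z-z_\ast\|_{H^1}<\varepsilon_0$, then $z$ cannot lie in the annulus, so $\|z-z_\ast\|_{H^1}<\varepsilon$. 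Thus the whole matter reduces to showing $m(\varepsilon)>0$ for a suitably fixed $\varepsilon_0$.

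First I would check that $J_\lambda$ of (\ref{eqn:functional}) is real-analytic on a ball $\|z-z_\ast\|_{H^1}<\varepsilon_0$ in $H^1(\Omega)$. The Dirichlet term, the linear term, and the nonlocal quadratic term $\frac{k\xi}{2|\Omega|}(\int_\Omega z)^2$ are analytic without comment; the only delicate term is $\int_\Omega G(z)\,dx$. Because $z_\ast$ is a classical solution of (\ref{eqn:13}) it is bounded, and since $h$, hence $g$ and $G$, are real-analytic, the superposition operator $z\mapsto G(z)$ is real-analytic from a neighbourhood of $z_\ast$ in $H^1(\Omega)$ into $L^1(\Omega)$; this yields analyticity of $J_\lambda$ and identifies its first and second variations, the latter being the self-adjoint operator $\mathcal{L}$ of (\ref{eqn:26}).

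The central step is the \L{}ojasiewicz--Simon inequality at the critical point $z_\ast$: there exist $\theta\in(0,1/2]$ and $C>0$ (shrinking $\varepsilon_0$ if needed) with
\[ |J_\lambda(z)-J_\lambda(z_\ast)|^{1-\theta}\le C\,\|\delta J_\lambda(z)\|_{L^2}, \qquad \|z-z_\ast\|_{H^1}<\varepsilon_0, \]
where $\delta J_\lambda(z)=-\bigl(D\Delta z+g(z)+\frac{k}{|\Omega|}(\lambda-\xi\int_\Omega z)\bigr)$ is the $L^2$-gradient of (\ref{eqn:gradient}). I expect this to be the main obstacle, as it is the only genuinely nontrivial input. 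The proof runs along standard lines: $\mathcal{L}=J_\lambda''(z_\ast)$ is Fredholm of index zero (principal part $-D\Delta$, the remaining terms relatively compact), so a Lyapunov--Schmidt reduction onto $\ker\mathcal{L}$ replaces $J_\lambda$ near $z_\ast$ by a real-analytic function of finitely many variables, to which the classical finite-dimensional \L{}ojasiewicz inequality applies; pulling back gives the displayed estimate.

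Granting this, $m(\varepsilon)>0$ follows by contradiction. If $m(\varepsilon)=0$, choose $z_n$ in the annulus with $J_\lambda(z_n)-J_\lambda(z_\ast)\to0$; the inequality forces $\|\delta J_\lambda(z_n)\|_{L^2}\to0$, so $z_n$ is an almost-critical sequence. Since $h$ is bounded, $g$ has at most linear growth, so $-D\Delta z_n=g(z_n)+O(1)+\delta J_\lambda(z_n)$ is bounded in $L^2$; elliptic estimates then bound $z_n$ in $H^2(\Omega)$, giving strong $H^1$-compactness. A subsequence converges to some $z_\infty$ with $\delta J_\lambda(z_\infty)=0$, $J_\lambda(z_\infty)=J_\lambda(z_\ast)$, and $\varepsilon\le\|z_\infty-z_\ast\|_{H^1}\le\varepsilon_0$. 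Because $z_\ast$ is a local minimum, after shrinking $\varepsilon_0$ it is the only minimizer of $J_\lambda$ on the closed ball, whence $z_\infty=z_\ast$, a contradiction. (Equivalently one may run the gradient flow (\ref{rdm2}) from $z_n$ and use the inequality to bound the trajectory length by $\frac{C}{\theta}(J_\lambda(z_n)-J_\lambda(z_\ast))^{\theta}$, reaching the same nearby equilibria.) Taking $\delta_0=m(\varepsilon)$ then yields (\ref{eqn:ista}), the margin $\varepsilon\le\varepsilon_0/4$ leaving room for this fixed choice of $\varepsilon_0$.
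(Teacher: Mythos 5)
The paper does not actually prove this lemma; it only says the proof is ``similar to Lemma 7 of \cite{ls13} and is omitted'', and the engine there is the \L{}ojasiewicz--Simon gradient inequality made available by the real-analyticity of $h$. So your choice of tool is the intended one, and the reduction to positivity of the annular infimum $m(\varepsilon)$, together with the elliptic-compactness extraction of a critical point $z_\infty$ at the level $J_\lambda(z_\ast)$ in the closed annulus, is sound as far as it goes (modulo the routine care about whether the gradient in the \L{}ojasiewicz--Simon estimate is measured in $L^2$ or $(H^1)^\ast$, and the fact that analyticity of the Nemytskii map $z\mapsto G(z)$ from $H^1(\Omega)$ into $L^1(\Omega)$ is not automatic for $N\ge 2$ and should be routed through the $L^\infty$ bound on $z_\ast$ and elliptic regularity).

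The genuine gap is the closing step: ``because $z_\ast$ is a local minimum, after shrinking $\varepsilon_0$ it is the only minimizer of $J_\lambda$ on the closed ball.'' A local minimizer of a real-analytic functional need not be isolated, even among minimizers at the same level: $J_\lambda$ is invariant under isometries of $\Omega$, so on a symmetric domain a non-symmetric minimizer comes with a continuum of minimizers, all sitting at the level $J_\lambda(z_\ast)$ (the \L{}ojasiewicz--Simon inequality itself forces every nearby critical point to have that value). Handling precisely this degeneracy is the reason real-analyticity is hypothesized, so it cannot be assumed away at the end; your contradiction argument only produces a critical point $z_\infty$ with $J_\lambda(z_\infty)=J_\lambda(z_\ast)$ and $\varepsilon\le\Vert z_\infty-z_\ast\Vert_{H^1}\le\varepsilon_0$, which is not yet absurd. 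The same objection hits your parenthetical gradient-flow variant: the trajectory-length bound $\frac{C}{\theta}\bigl(J_\lambda(z_n)-J_\lambda(z_\ast)\bigr)^{\theta}$ shows $z_n$ is close to \emph{some} equilibrium at the bottom level, not close to $z_\ast$. To finish you must either justify that the critical set of $J_\lambda$ at level $J_\lambda(z_\ast)$ inside the ball reduces to $\{z_\ast\}$ --- an isolatedness statement that does not follow from the stated hypotheses --- or work instead with the distance to that critical set, which is what the \L{}ojasiewicz--Simon trajectory estimate genuinely delivers. As written, the key step is unsupported.
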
 

We are ready to give the following proof using semi-duality. 

\begin{proof}[Proof of Theorem \ref{thm:2}]
Let $(z_0,w_0)$ be the initial value and let $0\leq z_\ast\in H^1(\Omega)$ be a local minimum of $J_\lambda(z)$, $z\in H^1(\Omega)$, for $\lambda$ defined by (\ref{eqn:lambda}). 

Given $\varepsilon>0$, we shall show the existence of $\delta>0$ such that 
\begin{equation} 
\Vert z_0-z_\ast\Vert_{H^1}+\Vert w_0-\overline{w}\Vert_{H^1}<\delta, 
 \label{eqn:delta}
\end{equation} 
implies 
\begin{equation} 
\Vert z(\cdot,t)-z_\ast\Vert_{H^1}+\Vert w(\cdot,t)-\overline{w}\Vert_{H^1} <C_{16}\varepsilon, \quad t\geq 0, 
 \label{eqn:t7}
\end{equation} 
for $\overline{w}\in {\bf R}$ defined by (\ref{eqn:ws}).  This property will imply the stability of $(z_\ast, \overline{w})$ concerning (\ref{rdm1}) in $X=C^2(\overline{\Omega})^2$, because the orbit 
\[ {\cal O}=\{(u(\cdot,t), v(\cdot,t))\}_{t\geq 0}, \] 
is compact in $X$.  

First, we take $\varepsilon_0>0$ be as in Lemma \ref{lem:5}. Then the total mass conservation in the form of (\ref{tmc1}) implies 
\[ \xi J_\lambda(z(\cdot,t))-\xi J_\lambda(z_\ast)\leq L(z_0,w_0)-L(z_\ast,\overline{w}), \quad t\geq 0, \] 
by (\ref{eqn:lya2}).  Given $\varepsilon\in (0,\varepsilon_0/4]$, next, we take $\delta_0$ as in Lemma \ref{lem:5}. Then we determine $\delta>0$ such that (\ref{eqn:delta}) implies 
\begin{equation} 
\Vert z_0-z_\ast\Vert_{H^1}<\varepsilon_0/2, \quad L(z_0, w_0)-L(z_\ast, \overline{w})<\xi\delta_0. 
 \label{eqn:second}
\end{equation} 
From the second inequality of (\ref{eqn:second}) we have 
\begin{equation} 
J_\lambda(z(\cdot,t))-J_\lambda(z_\ast)<\delta_0, \quad t\geq 0. 
 \label{eqn:t1}
\end{equation} 

Now we show 
\begin{equation} 
\Vert z(\cdot,t)-z_\ast\Vert_{H^1}<\varepsilon_0/2, \quad t\geq 0. 
 \label{eqn:t3}
\end{equation} 
In fact, if this is not the case we have $t_0>0$ such that 
\begin{equation} 
\Vert z(\cdot,t_0)-z_\ast \Vert_{H^1}= \varepsilon_0 /2<\varepsilon_0, 
 \label{eqn:t2}
\end{equation} 
because of the first inequality of (\ref{eqn:second}) and the continuity of $t\mapsto z(\cdot,t)\in H^1(\Omega)$.  Then Lemma \ref{lem:5}, based on (\ref{eqn:t1}) and (\ref{eqn:t2}), implies 
\[ \Vert z(\cdot,t_0)-z_\ast\Vert_{H^1}<\varepsilon\leq \varepsilon_0/4, \] 
a contradiction. Having (\ref{eqn:t1}) and (\ref{eqn:t3}), we obtain 
\begin{equation} 
\Vert z(\cdot,t)-z_\ast\Vert_{H^1}<\varepsilon, \quad t\geq 0.  
 \label{eqn:t5}
\end{equation} 

Since 
\[ \langle w(t)\rangle = \frac{1}{\vert\Omega\vert}\int_\Omega w(\cdot,t)=\frac{1}{\tau\vert\Omega\vert}\left( \lambda-\xi\int_\Omega z\right), \] 
it holds that 
\begin{eqnarray*} 
& & \vert \langle w(t)\rangle-\overline{w}\vert \leq \frac{\xi}{\tau\vert\Omega\vert}\Vert z_\ast-z(\cdot,t)\Vert_1 \\ 
& & \quad \leq \frac{\xi}{\tau\vert\Omega\vert^{1/2}}\Vert z_\ast-z(\cdot,t)\Vert_2 < \frac{\xi\varepsilon}{\tau\vert\Omega\vert^{1/2}}. 
\end{eqnarray*} 
Then (\ref{eqn:t7}) follows from (\ref{eqn:t6}).  
\end{proof} 

\section{Proof of Theorem \ref{thm:4}}\label{sec:6}

The eigenvalue problem of ${\cal A}$ in $L^2(\Omega: {\bf C})^2$ takes the form \[ {\cal A}\left( \begin{array}{c} 
\phi \\ 
\psi \end{array} \right)=\sigma\left( \begin{array}{c}
\phi \\ 
\psi \end{array} \right), \quad \left( \begin{array}{c} 
\phi \\ 
\psi \end{array} \right)\in D({\cal A})\setminus \{0\} \] 
which means $(\phi, \psi)\neq (0,0)$ and 
\begin{eqnarray} 
& & -(D\Delta\phi+g'(z_\ast)\phi+k\psi)=\sigma\{ (1+D\xi/\alpha)\phi-(\xi/\alpha)\psi\}, \nonumber\\ 
& & -\alpha\Delta\psi=\sigma(\psi+\xi\phi), \quad \mbox{in $\Omega$}, \nonumber\\ 
& & \left. \frac{\partial}{\partial \nu}(\phi, \psi)\right\vert_{\partial\Omega}=0, \quad \int_\Omega \psi+\xi\phi \ dx=0.
 \label{eqn:53}
\end{eqnarray} 
Henceforth, $( \ \cdot \ , \ \cdot \ )$ and $\Vert \ \cdot \ \Vert$ indicate the inner product and norm in $L^2(\Omega; {\bf C})^2$, respectively. 

\begin{lem} 
Any eigenvalue $\sigma\in {\bf C}$ of ${\cal A}$ satisfying 
\begin{equation} 
\mbox{Re} \ \sigma <\alpha k/2\xi 
 \label{eqn:55+1}
\end{equation} 
is real. 
 \label{lem:6.1}
\end{lem}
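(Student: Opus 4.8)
The plan is to test the eigenvalue system (\ref{eqn:53}) against the conjugate eigenfunction and extract the imaginary parts. Concretely, I would multiply the first equation of (\ref{eqn:53}) by $\bar\phi$ and the second by $\bar\psi$, integrate over $\Omega$, and integrate the Laplacians by parts using the Neumann condition. Writing $A=\int_\Omega|\phi|^2\,dx$, $B=\int_\Omega|\psi|^2\,dx$, $R=\int_\Omega|\nabla\psi|^2\,dx$, and $P=\int_\Omega\psi\bar\phi\,dx$, this yields the two complex identities
\begin{eqnarray*}
& & D\int_\Omega|\nabla\phi|^2\,dx-\int_\Omega g'(z_\ast)|\phi|^2\,dx-kP=\sigma\Big((1+D\xi/\alpha)A-(\xi/\alpha)P\Big),\\
& & \alpha R=\sigma(B+\xi\overline{P}).
\end{eqnarray*}
The crucial structural feature is that the left-hand side of the second identity is real and nonnegative, and that the only sign-indefinite quantity (the Dirichlet form carrying $g'(z_\ast)$) sits in the real part of the first identity, hence will be invisible to the imaginary-part argument below.

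Next I would write $\sigma=a+ib$ and suppose, towards a contradiction, that $b\neq 0$, so in particular $\sigma\neq 0$ and $|\sigma|^2>0$. Taking imaginary parts of both identities and using that $A,B,R$ and the Dirichlet form are real, the indefinite term drops out entirely, and with $x=\mbox{Re}\,P$, $y=\mbox{Im}\,P$ I obtain: from the second identity, $b(B+\xi x)=a\xi y$; from the first, $b(1+D\xi/\alpha)A-b(\xi/\alpha)x+y(k-a\xi/\alpha)=0$. I would keep alongside these the real part of the second identity, namely $\alpha R=a(B+\xi x)+b\xi y$.

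The decisive step is to combine these three relations so that the cross terms $x$ and $B+\xi x$ are eliminated and only sign-definite quantities remain. Substituting $B+\xi x=a\xi y/b$ into the real part of the second identity gives $y/b=\alpha R/(\xi|\sigma|^2)\geq 0$. Multiplying the second imaginary-part relation by $\alpha$ and using $-b\xi x=bB-a\xi y$ to remove $x$ gives $b\big[(\alpha+D\xi)A+B\big]+y(\alpha k-2a\xi)=0$. Dividing the latter by $b$ and inserting $y/b=\alpha R/(\xi|\sigma|^2)$ produces
\[ \frac{\alpha R}{\xi|\sigma|^2}\,(2a\xi-\alpha k)=(\alpha+D\xi)A+B. \]
Since $(\alpha+D\xi)A+B>0$ for a nonzero eigenfunction while $\alpha R/(\xi|\sigma|^2)\geq 0$, the left-hand side can match a positive number only if $2a\xi-\alpha k>0$, i.e. $\mbox{Re}\,\sigma=a>\alpha k/(2\xi)$. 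This contradicts (\ref{eqn:55+1}), forcing $b=0$, so $\sigma$ is real.

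I expect the main obstacle to be exactly the bookkeeping in this final combination: one must find the right weighting of the two imaginary-part identities (here multiplying the first by $\alpha$ and using the second to eliminate $x$) so that the indefinite Dirichlet energy and the real part $x$ of the cross term cancel, leaving only the manifestly nonnegative quantities $\alpha R$ and $(\alpha+D\xi)A+B$. A minor point to dispatch is that for $\sigma\neq 0$ a nonzero eigenfunction cannot have $\psi\equiv 0$ (the second equation of (\ref{eqn:53}) would then force $\sigma\xi\phi=0$, hence $\phi\equiv0$), which justifies the strict positivity $(\alpha+D\xi)A+B>0$ and the division by $b$ throughout.
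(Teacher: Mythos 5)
Your proof is correct and takes essentially the same route as the paper: multiply the two equations of (\ref{eqn:53}) by $\bar\phi$ and $\bar\psi$, separate real and imaginary parts, and combine the three resulting relations to eliminate the cross term, arriving at exactly the paper's final identity $(\alpha+D\xi)\Vert\phi\Vert^2+\Vert\psi\Vert^2=\frac{\alpha(2\xi\sigma_1-\alpha k)}{\xi\vert\sigma\vert^2}\Vert\nabla\psi\Vert^2$, whose sign forces $\mbox{Im}\,\sigma=0$ under (\ref{eqn:55+1}). The only differences are notational (your $P$ is the conjugate of the paper's $(\phi,\psi)$), and your write-up is in fact cleaner than the paper's, which contains a few typographical slips (a dropped factor of $\xi$ in (\ref{eqn:54}) and a $\sigma_2$ that should be $\sigma_1$ in (\ref{eqn:55})).
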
 
\begin{proof} 
We may assume $\sigma\neq 0$.  Letting 
\[ \sigma_1=\mbox{Re} \ \sigma, \quad \sigma_2=\mbox{Im} \ \sigma, \quad J_1=\mbox{Re} \ (\phi,\psi), \quad J_2=\mbox{Im} \ (\phi, \psi), \]  
we have 
\begin{eqnarray*} 
& & D\Vert \nabla\phi\Vert^2 -\int_\Omega g'(z_\ast)\vert \phi\vert^2-k(J_1-\imath J_2) \\ 
& & \quad =(\sigma_1+\imath \sigma_2)\{ (1+D\xi/\alpha)\Vert \phi\Vert^2-\xi/\alpha(J_1-\imath J_2)\}, \\ 
& & \alpha\Vert \nabla\psi\Vert^2=(\sigma_1+\imath\sigma_2)\Vert \psi\Vert^2+(\sigma_1+\imath\sigma_2)\xi(J_1+\imath J_2), 
\end{eqnarray*}   
by (\ref{eqn:53}).  Then it follows that 
\begin{eqnarray} 
& & kJ_2=\sigma_2(1+D\xi/\alpha)\Vert \phi\Vert^2-(\xi/\alpha)(\sigma_2J_1-\sigma_1J_2), \nonumber\\ 
& & \alpha\Vert \nabla\psi\Vert^2=\sigma_1\Vert\psi\Vert^2+\xi(\sigma_1J_1-\sigma_2J_2), \nonumber\\ 
& & 0=\sigma_2\Vert \psi\Vert^2+\xi(\sigma_2J_1+\sigma_1J_2). 
 \label{eqn:m11}
\end{eqnarray} 
The last two equalities of (\ref{eqn:m11}) imply 
\begin{equation} 
\alpha\sigma_2\Vert \nabla\psi\Vert^2=-(\sigma_1^2+\sigma_2^2)J_2, 
 \label{eqn:54}
\end{equation} 
while from the first and the third equalities we have 
\begin{equation} 
(\sigma_2/\alpha)\Vert \psi\Vert^2+\sigma_2(1+D\xi/\alpha)\Vert \phi\Vert^2=(k-2\xi\sigma_2/\alpha)J_2.  
 \label{eqn:55}
\end{equation} 
Equalities (\ref{eqn:54})-(\ref{eqn:55}) are reduced to 
\[ (\sigma_2/\alpha)\Vert \psi\Vert^2+\sigma_2(1+D\xi/\alpha)\Vert \phi\Vert^2=-\alpha\sigma_2\frac{k-2\xi\sigma_1/\alpha}{\sigma_1^2+\sigma_2^2}\Vert \nabla\psi\Vert^2. \] 
Thus $\sigma_1<\alpha k/2$ implies $\sigma_2=0$.  
\end{proof}

Henceforth, we define $-\Delta_N$ by $-\Delta_N\phi=-\Delta\phi$, $\phi\in D(-\Delta_N)$, and 
\begin{eqnarray*} 
& & D(-\Delta_N)=\left\{ \phi\in H^2(\Omega)\cap L^2_0(\Omega) \mid \left. \frac{\partial\phi}{\partial\nu}\right\vert_{\partial\Omega}=0\right\}, \\ 
& & L^2_0(\Omega)=\{ \phi \in L^2(\Omega) \mid \int_\Omega\phi=0\}. 
\end{eqnarray*} 
Since 
\[ \int_\Omega(-\Delta\phi)=0, \quad \phi\in D(A), \] 
the operator $-\Delta_N$ is a self-adjoint operator in $L^2_0(\Omega)$.  We put also 
\[ Q\phi=\phi-\langle \phi \rangle, \quad \langle \phi \rangle=\frac{1}{\vert\Omega\vert}\int_\Omega\phi, \] 
for $\phi\in L^2(\Omega)$.  

The proof of the following lemma is similar to that of Lemma 3.3 of \cite{cjm}, although more careful computation is needed. 

\begin{lem} 
The algebraic and geometric multiplicities of the eigenvalue $\sigma$ of ${\cal A}$ in (\ref{eqn:55+1}) coincide. 
 \label{lem:6.2}
\end{lem}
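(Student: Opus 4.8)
The plan is to prove that $\sigma$ admits no Jordan chain of length two; since $\mathcal{A}$ has compact resolvent (it is a second–order elliptic system on a bounded domain, so its resolvent is compact by Sobolev embedding), $\sigma$ is an isolated eigenvalue of finite algebraic multiplicity, and the coincidence of the two multiplicities is equivalent to the absence of length–two chains, i.e. to $\ker(\mathcal{A}-\sigma)^2=\ker(\mathcal{A}-\sigma)$, the higher kernels then agreeing by iteration. So I would argue by contradiction: suppose there are $(\Phi,\Psi)\in D(\mathcal{A})$ and a genuine eigenfunction $0\neq(\phi,\psi)\in D(\mathcal{A})$ with $(\mathcal{A}-\sigma)(\Phi,\Psi)=(\phi,\psi)$ and $(\mathcal{A}-\sigma)(\phi,\psi)=0$. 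By Lemma \ref{lem:6.1} the eigenvalue $\sigma$ is real, and since all coefficients in (\ref{eqn:53}) are real I may take $\phi,\psi,\Phi,\Psi$ real-valued.

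Rewriting $(\mathcal{A}-\sigma)(\Phi,\Psi)=(\phi,\psi)$ as $(\mathcal{A}_1-\sigma M)(\Phi,\Psi)=M(\phi,\psi)$ gives the inhomogeneous system
\begin{eqnarray*}
& & -(D\Delta\Phi+g'(z_\ast)\Phi+k\Psi)-\sigma[(1+D\xi/\alpha)\Phi-(\xi/\alpha)\Psi]=(1+D\xi/\alpha)\phi-(\xi/\alpha)\psi, \\
& & -\alpha\Delta\Psi-\sigma(\Psi+\xi\Phi)=\xi\phi+\psi,
\end{eqnarray*}
coupled with the eigenfunction equations (\ref{eqn:53}) for $(\phi,\psi)$. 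The core of the computation is to pair these relations against the components of the eigenfunction and use the self-adjointness of $-\Delta_N$. Pairing the second inhomogeneous equation with $\psi$ and the eigenfunction equation $-\alpha\Delta\psi=\sigma(\psi+\xi\phi)$ with $\Psi$, the Laplacian terms cancel and one obtains
\[ \Vert\psi\Vert^2+\xi(\phi,\psi)=\sigma\xi P, \qquad P:=(\phi,\Psi)-(\Phi,\psi). \]
Treating the first pair of equations the same way, the $-D\Delta$ and the $g'(z_\ast)$ contributions cancel by symmetry, leaving $(\sigma\xi/\alpha-k)P=(1+D\xi/\alpha)\Vert\phi\Vert^2-(\xi/\alpha)(\phi,\psi)$. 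Adding $\alpha$ times this to the previous identity eliminates the cross term $(\phi,\psi)$ and yields the \emph{decisive relation}
\[ (2\sigma\xi-\alpha k)\,P=\Vert\psi\Vert^2+(\alpha+D\xi)\Vert\phi\Vert^2. \]

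Next I would pin down the sign of $P$ from the eigenfunction alone. Pairing $-\alpha\Delta\psi=\sigma(\psi+\xi\phi)$ with $\psi$ gives $\alpha\Vert\nabla\psi\Vert^2=\sigma[\Vert\psi\Vert^2+\xi(\phi,\psi)]$, and combining with the first identity above produces
\[ \sigma^2\xi\,P=\alpha\Vert\nabla\psi\Vert^2\geq 0, \]
so $P\geq 0$ whenever $\sigma\neq 0$. Now the hypothesis (\ref{eqn:55+1}), together with the reality of $\sigma$ and $\xi>0$, gives $2\sigma\xi-\alpha k<0$, whence the left-hand side of the decisive relation is $\leq 0$ while its right-hand side is $\geq 0$, with $\alpha+D\xi>0$. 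Both sides must therefore vanish, forcing $\phi=\psi=0$ and contradicting $(\phi,\psi)\neq 0$. This settles the case $\sigma\neq 0$.

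The step I expect to be the main obstacle is the borderline value $\sigma=0$, which belongs to the region (\ref{eqn:55+1}) but at which the identity $\sigma^2\xi P=\alpha\Vert\nabla\psi\Vert^2$ degenerates and no longer controls the sign of $P$. There the $g'(z_\ast)$-terms still cancel, but the pairings above collapse onto the single relation $-\alpha k\,P=\Vert\psi\Vert^2+(\alpha+D\xi)\Vert\phi\Vert^2$ and leave $P$ otherwise undetermined; excluding a Jordan block at $\sigma=0$ requires a separate, more delicate analysis, for instance a direct study of the solvability of $(\mathcal{A}_1-\sigma M)(\Phi,\Psi)=M(\phi,\psi)$ against the kernel of the formal adjoint of $\mathcal{A}_1-\sigma M$ under the constraint $\int_\Omega\Psi+\xi\Phi\,dx=0$, or a limiting argument as $\sigma\to 0$. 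This is presumably the point at which, as remarked before the statement, more careful computation is needed than in Lemma 3.3 of \cite{cjm}.
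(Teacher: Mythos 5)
Your pairing argument is a genuinely different route from the paper's, and for $\sigma\neq 0$ it is correct: I checked the algebra, and the identities $\Vert\psi\Vert^2+\xi(\phi,\psi)=\sigma\xi P$, $(\sigma\xi/\alpha-k)P=(1+D\xi/\alpha)\Vert\phi\Vert^2-(\xi/\alpha)(\phi,\psi)$, hence $(2\sigma\xi-\alpha k)P=\Vert\psi\Vert^2+(\alpha+D\xi)\Vert\phi\Vert^2$, together with $\sigma^2\xi P=\alpha\Vert\nabla\psi\Vert^2\geq 0$, do force $\phi=\psi=0$ under (\ref{eqn:55+1}). The paper instead eliminates $\psi$ through the resolvent $B^{-1}=(-\Delta_N-\sigma/\alpha)^{-1}$, reduces the chain equation to a scalar self-adjoint problem $\tilde{\cal L}\phi=W$ with $\tilde{\cal L}\phi_0=0$, and derives the contradiction from the Fredholm orthogonality $(W,\phi_0)=0$ by writing $(W,\phi_0)$ as the manifestly positive sum (\ref{eqn:65}). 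Your version avoids the resolvent reduction and is more elementary, but the paper's formulation has one concrete advantage: in (\ref{eqn:65}) the delicate terms carry explicit factors of $\sigma$, so the case $\sigma=0$ requires no separate treatment.

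That is exactly where your proof has a genuine gap: you leave $\sigma=0$ open, and this case cannot be waved away. It lies in the region (\ref{eqn:55+1}), and it is precisely the case needed for Theorem \ref{thm:4}, whose zero-eigenvalue count relies on the semisimplicity of $\sigma=0$. The gap is, however, closable within your own framework, because the sign of $P$ at $\sigma=0$ can be read off directly from the chain equations rather than from the degenerate identity $\sigma^2\xi P=\alpha\Vert\nabla\psi\Vert^2$. Indeed, at $\sigma=0$ the eigenfunction equation gives $\psi\equiv c$ constant, and the membership constraint $\int_\Omega\psi+\xi\phi\,dx=0$ gives $c=-\xi\langle\phi\rangle$, so that $\xi\phi+\psi=\xi Q\phi$ and the chain equation $-\alpha\Delta\Psi=\xi\phi+\psi$ yields $Q\Psi=(\xi/\alpha)(-\Delta_N)^{-1}Q\phi$. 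Using the constraint $\int_\Omega\Psi+\xi\Phi\,dx=0$ one checks that the mean-value contributions to $(\phi,\Psi)$ and $(\Phi,\psi)$ cancel, leaving
\[ P=(Q\phi,Q\Psi)=\frac{\xi}{\alpha}\bigl((-\Delta_N)^{-1}Q\phi,\,Q\phi\bigr)\geq 0, \]
after which your decisive relation $-\alpha k\,P=\Vert\psi\Vert^2+(\alpha+D\xi)\Vert\phi\Vert^2$ forces $\phi=\psi=0$ as before. With this supplement your proof is complete; without it, the lemma is only established for $\sigma\neq 0$.
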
 
\begin{proof} 
Let 
\[ ({\cal A}-\sigma I)\left( \begin{array}{c} 
\phi_0 \\ 
\psi_0 \end{array} \right)=0, \quad \left( \begin{array}{c} 
\phi_0 \\ 
\psi_0 \end{array} \right)\in D({\cal A})\setminus \{0\}. \]  
To prove 
\begin{equation} 
\mbox{Ker} \ ({\cal A}-\sigma I)=\mbox{Ker} \ ({\cal A}-\sigma I)^m, \quad m\geq 2, 
 \label{eqn:56}
\end{equation} 
it suffices to show the nonexistence of the solution to 
\begin{equation} 
({\cal A}-\sigma I)\left( \begin{array}{c} 
\phi \\ 
\psi \end{array} \right)=\left( \begin{array}{c} 
\phi_0 \\ 
\psi_0 \end{array}\right), \quad \left(\begin{array}{c} 
\phi \\ 
\psi \end{array} \right)\in D({\cal A}). 
 \label{eqn:57}
\end{equation} 

First, equation (\ref{eqn:56}) yields 
\begin{equation} 
{\cal A}_1\left( \begin{array}{c}
\phi_0 \\ 
\psi_0 \end{array}\right)=\sigma M_1\left( \begin{array}{c} 
\phi_0 \\ 
\psi_0 \end{array} \right), \quad \int_\Omega \xi\phi_0+\psi_0 \ dx=0, 
 \label{eqn:58}
\end{equation} 
and hence 
\[ -\alpha\Delta \psi_0=\sigma(\xi\phi_0+\psi_0)=\sigma(\xi Q\phi_0+Q\psi_0), \] 
from the second component. Multiplying $Q$ to both sides, we obtain 
\begin{equation} 
Q\psi_0=\sigma\xi/\alpha(-\Delta_N-\sigma/\alpha)^{-1}Q\phi_0, \quad \langle \psi\rangle =-\xi\langle \phi\rangle. 
 \label{eqn:59}
\end{equation} 
Then the first component of (\ref{eqn:58}) implies 
\begin{eqnarray} 
& & -D\Delta\phi_0-g'(z_\ast)\phi_0+\{ k\xi-\sigma(1+\xi(D+\xi)/\alpha)\}\langle \phi_0\rangle \nonumber\\ 
& & \quad =\sigma\{ (1+D\xi/\alpha)+\xi/\alpha(k-\sigma\xi/\alpha)(-\Delta_N-\sigma/\alpha)^{-1}\}Q\phi_0. 
 \label{eqn:60}
\end{eqnarray} 

Similarly, (\ref{eqn:57}) implies 
\[ ({\cal A}_1-\sigma M_1)\left( \begin{array}{c} 
\phi \\ 
\psi \end{array} \right)=M_1\left( \begin{array}{c} 
\phi_0 \\ 
\psi_0 \end{array} \right), \] 
and hence 
\begin{eqnarray} 
& & -D\Delta\phi-g'(z_\ast)\phi-\sigma(1+D\xi/\alpha)\phi-(k-\sigma\xi/\alpha))\psi \nonumber\\ 
& & \quad =(1+D\xi/\alpha)\phi_0-(\xi/\alpha)\psi_0, \nonumber\\ 
& & -\alpha\Delta\psi-\sigma\psi-\sigma\xi\phi=\xi\phi_0+\psi_0. 
 \label{eqn:61}
\end{eqnarray} 
From the second equation of (\ref{eqn:61}) it follows that 
\[ Q\psi=\frac{\sigma\xi}{\alpha}(-\Delta_N-\sigma/\alpha)^{-1}Q\phi+\frac{1}{\alpha}(-\Delta_N-\sigma/\alpha)^{-1}(\xi Q\phi_0+Q\psi_0). \] 
Plug this into the first equation of (\ref{eqn:61}).  Then we obtain 
\begin{equation} 
\tilde {\cal L}(\phi)=W, 
 \label{eqn:61.5}
\end{equation} 
where 
\begin{eqnarray} 
& & \tilde{\cal L}(\phi)=-D\Delta\phi-g'(z_\ast)\phi+\{k\xi-\sigma(1+\xi(D+\xi)/\alpha)\}\langle \phi\rangle \nonumber\\ 
& & \quad -\sigma\{ (1+D\xi/\alpha)+(\xi/\alpha)(k-\sigma(\xi/\alpha))B^{-1}\}Q\phi, 
 \label{eqn:62}
\end{eqnarray} 
and 
\begin{eqnarray*} 
& & W=(1+\xi(D+\xi)/\alpha)\langle \phi_0\rangle+(1+D\xi/\alpha)Q\phi_0-(\xi/\alpha)Q\psi_0 \\ 
& & \quad +\frac{\xi}{\alpha}(k-\sigma\xi/\alpha)B^{-1}Q\phi_0+\frac{1}{\alpha}(k-\sigma\xi/\alpha)B^{-1}Q\psi_0, 
\end{eqnarray*} 
using $B=-\Delta_N-\sigma/\alpha$. 

The operator $\tilde{\cal L}$ in (\ref{eqn:62}) is realized as a self-adjoint operator in $L^2(\Omega)$ with the domain $D(\tilde{\cal L})=\{ \phi\in H^2(\Omega) \mid \left. \frac{\partial\phi}{\partial\nu}\right\vert_{\partial\Omega}=0\}$. It holds that $\tilde{\cal L}(\phi_0)=0$ by (\ref{eqn:60}). Hence (\ref{eqn:61.5}) implies 
\begin{equation} 
(W,\phi_0)=0. 
 \label{eqn:64}
\end{equation} 
Here we have 
\begin{eqnarray*} 
& & (W,\phi_0)=(1+\xi(D+\xi)/\alpha)\Vert \langle \phi_0\rangle\Vert^2+(1+D\xi/\alpha)\Vert Q\phi_0\Vert^2 \\ 
& & \quad +\frac{\xi}{\alpha}(k-\sigma\xi/\alpha)(B^{-1}Q\phi_0, Q\phi_0) -\frac{\xi}{\alpha}(Q\psi_0, Q\phi_0) \\ 
& & \quad +\frac{1}{\alpha}(k-\sigma\xi/\alpha)(B^{-1}Q\psi_0, Q\phi_0).  
\end{eqnarray*} 
Since (\ref{eqn:59}), the sum of the last three terms on the right-hand side of the above equality is equal to 
\begin{eqnarray*} 
& & \frac{\xi}{\alpha}(k-\sigma\xi/\alpha)(B^{-1}Q\phi_0, Q\phi_0)-\frac{\xi}{\alpha}\cdot\frac{\sigma\xi}{\alpha}(B^{-1}Q\phi_0, Q\phi_0) \nonumber\\ 
& & \qquad +\frac{1}{\alpha}(k-\sigma\xi/\alpha)((\sigma\xi/\alpha)B^{-1}Q\phi_0, B^{-1}Q\phi_0) \\ 
& & \quad = \frac{\xi}{\alpha}\left\{ (k-2\sigma\xi/\alpha)\Vert B^{-1/2}Q\phi_0\Vert^2+\frac{\sigma}{\alpha}(k-\sigma\xi/\alpha)\Vert B^{-1}Q\phi\Vert_2^2 \right\}. 
\end{eqnarray*} 
Hence it follows that 
\begin{eqnarray} 
& & (W, \phi_0)= (1+\xi(D+\xi)/\alpha)\Vert \langle \phi_0\rangle\Vert^2+(1+D\xi/\alpha)\Vert Q\phi_0\Vert^2 \nonumber\\ 
& & +\frac{\xi}{\alpha}\left\{ (k-2\sigma\xi/\alpha)\Vert B^{-1/2}Q\phi_0\Vert^2+\frac{\sigma}{\alpha}(k-\sigma\xi/\alpha)\Vert B^{-1}Q\phi_0\Vert^2\right\}. 
 \label{eqn:65}
\end{eqnarray} 

From the assumption we have 
\[ k-2\sigma\xi/\alpha>0, \] 
recalling Lemma \ref{lem:6.1}. By this condition the right-hand side on (\ref{eqn:65}) is positive if $\sigma\geq 0$.  If $\sigma<0$, on the other hand, it is obvious that this positivity is satisfied. In any case we have $(W,\phi_0)>0$, a contradiction.  
\end{proof} 

For the proof of Theorem \ref{thm:4}, we write the second equation of (\ref{eqn:53}) as 
\[ \alpha(-\Delta_N-(\sigma/\alpha))Q\psi=\sigma\langle \psi\rangle+\sigma\xi\phi=\sigma\xi Q\phi, \] 
that is, 
\begin{equation} 
Q\psi=\sigma(\xi/\alpha)(-\Delta_N-(\sigma/\alpha))^{-1}Q\phi. 
 \label{eqn:66}
\end{equation} 
Next, the first equation of (\ref{eqn:53}) writes 
\begin{eqnarray*} 
& & -D\Delta\phi-g'(z_\ast)\phi-k(\langle \psi\rangle+Q\psi) \\ 
& & \quad = \sigma\left[ (1+D\xi/\alpha)(\langle \phi\rangle+Q\phi)-(\xi/\alpha)(\langle \psi\rangle+Q\psi)\right] \\ 
& & \quad = \sigma\left[ (1+\xi(D+\xi)/\alpha)\langle \phi\rangle+(1+D\xi/\alpha)Q\phi-(\xi/\alpha)Q\psi\right]. 
\end{eqnarray*} 
Therefore, it holds that 
\begin{eqnarray} 
& & -D\Delta\phi-g'(z_\ast)\phi+k\xi\langle\phi\rangle =\sigma\left[ (1+\xi(D+\xi)/\alpha)\langle\phi\rangle +(1+D\xi/\alpha)Q\phi \right. \nonumber\\ 
& & \qquad \left. - \sigma(\xi/\alpha)^2(-\Delta_N-\sigma/\alpha)^{-1}Q\phi+(k\xi/\alpha)(-\Delta_N-\sigma/\alpha)^{-1}Q\phi \right] \nonumber\\ 
& & \quad =\sigma[(1+\xi(D+\xi)/\alpha)\langle\phi\rangle \nonumber\\ 
& & \qquad +\{ (1+D\xi/\alpha)+(\xi/\alpha)(k-\sigma(\xi/\alpha))(-\Delta -\sigma/\alpha)^{-1}\}Q\phi]. 
 \label{eqn:68}
\end{eqnarray} 

By Lemmas \ref{lem:6.1} and \ref{lem:6.2}, any eigenvalue $\sigma$ of ${\cal A}$ in (\ref{eqn:55+1}) is real and is provided with equal algebraic and geometric multiplicities.  Then it holds that 
\[ \frac{\sigma}{\alpha}<\frac{k}{2\xi}<\frac{k}{\xi}<\eta_2, \] 
by (\ref{eqn:23}). Here we put 
\begin{eqnarray*} 
& & M(s)=(1+\xi(D+\xi)/\alpha)(1-Q) \\ 
& & \quad + \{ (1+D\xi/\alpha)+(\xi/\alpha)(k+s\xi)(-\Delta_N+s)^{-1}Q, 
\end{eqnarray*} 
for each $s>-\eta_2$.  From (\ref{eqn:26}) and (\ref{eqn:68}), the complex number $\sigma$ in (\ref{eqn:55+1}) is an eigenvalue of ${\cal A}$ if and only if it is real, $\sigma/\alpha<\eta_2$, and 
\begin{equation} 
{\cal L}\phi=\sigma M(-\sigma/\alpha)\phi, \quad \phi\in D({\cal L})\setminus\{0\}. 
 \label{eqn:69}
\end{equation} 

To study (\ref{eqn:68}) we fix $s>-\eta_2$ and take the eigenvalue problem 
\begin{equation} 
{\cal L}\phi=\mu M(s)\phi, \quad \phi\in D({\cal L})\setminus \{0\}. 
 \label{eqn:70}
\end{equation} 
If $\Sigma(s)$ denotes the set of eigenvalues $\mu$ of (\ref{eqn:70}), then the relation (\ref{eqn:69}) gives $\sigma\in \Sigma(-\sigma/\alpha)$.  

Problem (\ref{eqn:70}) admits infinite number of eigenvalues, which are real, denoted by 
\[ \mu_1(s)\leq \mu_2(s)\leq \cdots \leq \mu_j(s)\leq \cdots\rightarrow+\infty, \] 
according to their multiplicities.  Then we use the weighted $L^2$ norm $\Vert \ \cdot \ \Vert_s$ defined by 
\[ \Vert u\Vert_s^2=(u,u)_s, \quad (u,v)_s=(M(s)u,v). \] 
In fact, min-max principle is available to define those $\Sigma(s)=\{ \mu_j(s)\}_{j=1}^\infty$ through the Rayleigh quotient (see, e.g. \cite{davis95})
\[ R(\phi, s)=\frac{D\Vert \nabla\phi\Vert^2-(g'(z_\ast)\phi, \phi)+k\xi\Vert \langle \phi\rangle\Vert^2}{\Vert \phi\Vert_s^2}. \] 
Thus, it holds that 
\begin{eqnarray} 
& & \mu_j(s)=\inf\{ \sup_{\phi\in X_j\setminus \{0\}}R(\phi,s) \mid X_j\subset H^1(\Omega), \ \mbox{codim} \ X_j=j-1\} \nonumber\\ 
& & \quad =\inf\{ R(\phi, s) \mid \phi\in H^1(\Omega), \ (\phi, \phi_\ell(s))=0, \ 1\leq \ell\leq j-1\}, 
 \label{eqn:mmp}
\end{eqnarray} 
where $\phi_j(s)$ denotes the eigenfunction of (\ref{eqn:70}) corresponding to $\mu_j(s)$ such that $\Vert \phi_j(s)\Vert_s=1$.  

Let the eigenvalues of $-\Delta_N$ be $\{\eta\}_{\ell=2}^\infty$, 
\[ 0<\eta_2\leq \eta_3\leq \cdots \leq\eta_\ell \leq \cdots \rightarrow+\infty, \] 
and $\{ \phi_\ell\}_{\ell=1}^\infty$ be its $L^2$ ortho-normal eigenfunctions. Then we have 
\begin{eqnarray} 
& & \Vert \phi\Vert_s^2=(1+\xi(D+\xi)/\alpha)\langle \phi\rangle^2 \nonumber\\ 
& & \quad +\sum_{\ell=2}^\infty\{ (1+D\xi/\alpha)+ (\xi/\alpha)(k+s\xi)(\eta_\ell+s)^{-1}\}\vert (\phi, \phi_\ell)\vert^2. 
 \label{eqn:70.5}
\end{eqnarray} 
By (\ref{eqn:23}) we have 
\[ 0\leq \frac{k+s\xi}{\eta_\ell+s}\leq C_{17}, \quad s\geq -k/\xi, \ \ell=2,3,\cdots. \] 
Then it holds that 
\[ C_{18}^{-1}R(\phi)\leq R(\phi, s)\leq C_{18}R(\phi), \quad s\geq -k/\xi, \] 
where 
\[ R(\phi)=\frac{D\Vert \nabla\phi\Vert^2-(g'(z_\ast)\phi, \phi)+k\xi\Vert \langle \phi\rangle\Vert^2}{\Vert \phi\Vert^2}. \] 
Hence the number of zeros and that of negative elements of $\{\mu_j(s)\}_{j=1}^\infty$ are equal to the number of zeros and that of negative eigenvalues of ${\cal L}$, denoted by $m^\ast$ and $m$, respectively.  More precisely, we have 
\begin{equation} 
C_{18}^{-1}\mu_j^\ast \leq \mu_j(s) \leq C_{18}\mu_j^\ast, \quad s\in [-k/\xi, +\infty), 
 \label{eqn:71.5} 
\end{equation}  
for each $j$, where $\mu_j^\ast$ denote the $j$-th eigenvalue of ${\cal L}$.  

From (\ref{eqn:69}), the real number $\sigma$ in $\sigma<\alpha k/\xi$ is an eigenvlue of ${\cal A}$ if and only if 
\begin{equation} 
\mu_j(-\sigma/\alpha)=\sigma, 
 \label{eqn:71}
\end{equation} 
for some $j\geq 1$.  In particular, the number of zero eigenvalues of ${\cal A}$ is equal to that of zero elements of $\{ \mu_j(0)\}_{j=1}^\infty$.  Namely, this number is equal to $m^\ast$. 

Rewriting (\ref{eqn:71}) with $s=-\sigma/\alpha$, on the other hand, we see that the number of negative eigenvalues of ${\cal A}$ is equal to that of $s>0$ such that 
\begin{equation} 
\frac{\mu_j(s)}{s}=-\alpha, 
 \label{eqn:75}
\end{equation} 
for some $j=1,\cdots, m$.  

Here we have 
\begin{equation} 
\frac{\partial}{\partial s}\frac{R(\phi, s)}{s}=-\frac{R(\phi, s)}{s^2}\cdot\frac{1}{\Vert \phi\Vert_s^2}\cdot\frac{\partial}{\partial s}(s\Vert \phi\Vert_s^2),  
 \label{eqn:78}
\end{equation} 
and 
\begin{eqnarray*} 
& & \frac{\partial}{\partial s}(s\Vert \phi\Vert_s^2)=(1+\xi(D+\xi)/\alpha)\langle \phi\rangle^2 \\ 
& & \quad + \sum_{\ell=2}^\infty\{ (1+D\xi/\alpha)+(\xi/\alpha)c_\ell(s)\}\vert (\phi, \phi_\ell)\vert^2, 
\end{eqnarray*} 
with 
\[ 0\leq c_\ell(s)=\frac{\eta_\ell(k+s\xi)}{(\eta_\ell+s)^2}+\frac{\xi s}{\eta_\ell+s}\leq C_{19}, \quad s>0. \] 
Hence it follows that 
\begin{equation} 
C_{19}^{-1}\leq \frac{1}{\Vert \phi\Vert_s^2}\frac{\partial}{\partial s}(s\Vert \phi\Vert_s^2)\leq C_{19}. 
 \label{eqn:79+1}
\end{equation} 
From (\ref{eqn:78}) and (\ref{eqn:79+1}) we have $c_0>0$ independent of $s>0$ and $\phi\in H^1(\Omega)\setminus \{0\}$ such that 
\begin{eqnarray} 
\frac{R(\phi, s')}{s'} & \geq & \frac{R(\phi,s)}{s}-c_0\frac{R(\phi,s)}{s^2}(s'-s)+o(s'-s) \nonumber\\ 
& = & \left( 1-\frac{c_0}{s}(s'-s)\right)\frac{R(\phi,s)}{s}+o(s'-s), 
 \label{eqn:79}
\end{eqnarray} 
as $s'\downarrow s$ uniformly in $s$ and $\phi$. 

By (\ref{eqn:mmp}) and (\ref{eqn:79}) it holds that 
\begin{eqnarray*} 
\frac{\mu_j(s')}{s'} & \geq & \left(1-\frac{c_0}{s}(s'-s)\right)\frac{\mu_j(s)}{s}+o(s'-s) \\ 
& = & \frac{\mu_j(s)}{s}-\frac{c_0\mu_j(s)}{s^2}(s'-s)+o(s'-s), 
\end{eqnarray*} 
as $s'\downarrow s>0$.  In particular, the mapping 
\[ s\in (0,+\infty) \quad \mapsto \quad \frac{\mu_j(s)}{s}<0, \] 
is strictly increasing if $\mu_j(s)<0$, that is, 
\begin{equation} 
\frac{\mu_j(s')}{s'}>\frac{\mu_j(s)}{s}, \quad s'>s>0, 
 \label{eqn:mono-st}
\end{equation} 
for $j=1, \cdots, m$. 

To confirm the continuity of 
\begin{equation} 
s\in (0, +\infty)\mapsto \mu_j(s), 
 \label{eqn:conti}
\end{equation} 
we use its monotonicity (non-increasing) derived from 
\begin{equation} 
\frac{d}{ds}\Vert \phi\Vert_s^2\geq 0, \quad \phi\in L^2(\Omega). 
 \label{eqn:mono-s}
\end{equation} 
Here, we note that (\ref{eqn:mono-s}) is a consequence of (\ref{eqn:23}) and (\ref{eqn:70.5}).  Then (\ref{eqn:mono-st}) and (\ref{eqn:mono-s}) imply 
\[ \mu_j(s_1)\leq \mu_j(s_2)\leq \frac{s_2}{s_1}\mu_j(s_1), \quad 0<s_2\leq s_1, \] 
and hence the continuity of (\ref{eqn:conti}).  

Since (\ref{eqn:71.5}) implies 
\[ \lim_{s\downarrow 0}\frac{\mu_j(s)}{s}=-\infty, \quad \lim_{s\uparrow+\infty}\frac{\mu_j(s)}{s}=0, \] 
each $j=1,\cdots,m$ admits a unique $s=s_j>0$ such that (\ref{eqn:75}).  Thus, the number of negative eigenvalues of ${\cal A}$ is equal to $m$.  The proof is complete. \hfill $\Box$ 
\vskip 0.5cm
\noindent{\bf Acknowledgements.} The first author has partially been supported by NAWI Graz.

\vspace{5mm} 

\begin{flushright} 
Evangelos Latos \\ 
Institute for Mathematics ans Scientific Computing \\ 
University of Graz \\ 
A-8010 Graz, heinrichstr 36 \\ 
Austria \\ 
\vspace{2mm} 
Yoshihisa Morita \\ 
Department of Applied Mathematics and Informatics \\ 
Ryukoku University \\ 
Seta Otsu 520-2194 \\ 
Japan \\ 
\vspace{2mm} 
Takashi Suzuki \\ 
Division of Mathematical Science \\ 
Department of Systems Innovation \\ 
Graduate School of Engineering Science \\ 
Osaka University \\ 
Machikaneyamacho, Toyonakashi, 560-8531 \\ 
Japan 
\end{flushright}

\end{document}